\def\NAT@def@citea{\def\@citea{\NAT@separator}}
\numberwithin{equation}{section}
\DeclareMathOperator*{\argmin}{arg\,min}
\def\A{{\mathcal A}}
\def\B{{\mathcal B}}
\def\G{{\mathcal G}}
\def\H{{\mathcal H}}
\def\I{{\mathcal I}}
\def\M{{\mathcal M}}
\def\S{{\mathcal S}}
\def\T{{\mathcal T}}
\def\U{{\mathcal U}}
\def\X{{\mathcal X}}
\def\Y{{\mathcal Y}}
\def\Z{{\mathcal Z}}
\newtheorem{lemma}{Lemma}[section]
\newtheorem{definition}{Definition}[section]
\newtheorem{theorem}{Theorem}[section]
\newtheorem{assumption}{Assumption}[section]
\newtheorem{corollary}{Corollary}
\begin{document}

\title{On the Linear Convergence Rate of Generalized ADMM for Convex Composite Programming}

\author{
\name{Han Wang\textsuperscript{a}, Peili Li\textsuperscript{a,b} and Yunhai Xiao\textsuperscript{b}\thanks{CONTACT Yunhai Xiao. Email: yhxiao@henu.edu.cn}}
\affil{\textsuperscript{a}School of Mathematics and Statistics, Henan University, Kaifeng 475000, P.R. China; \\ \textsuperscript{b}Center for Applied Mathematics of Henan Province, Henan University, Zhengzhou 450046, P.R. China}
}

\maketitle

\begin{abstract}
Over the fast few years, the numerical success of the generalized alternating direction method of multipliers (GADMM) proposed by Eckstein \& Bertsekas [Math. Prog., 1992] has inspired intensive attention in analyzing its theoretical convergence properties.
In this paper, we devote to establishing the linear convergence rate of the semi-proximal GADMM (sPGADMM) for solving linearly constrained convex composite optimization problems. The semi-proximal terms contained in each subproblem possess the abilities of handling with multi-block problems  efficiently.
We initially present some important inequalities for the sequence generated by the sPGADMM, and then establish the local linear convergence rate under the assumption of calmness. As a by-product, the global convergence property is also discussed.
\end{abstract}

\begin{keywords}
Alternating direction method of multipliers; semi-proximal terms; error bound condition; calmness; linear convergence rate
\end{keywords}

\section{Introduction}
Let $\mathcal{X}$, $\mathcal{Y}$ and $\mathcal{Z}$ be finite-dimensional real Euclidean spaces each equipped with an inner product $\left \langle \cdot,\cdot \right \rangle$ and its induced norm $\left \| \cdot \right \|$, in which, $\mathcal{Y}:=\mathcal{Y}_1 \times \cdots\times\mathcal{Y}_m$ and $\mathcal{Z}:=\mathcal{Z}_1\times \cdots\times\mathcal{Z}_n$ are the Cartesian product of some finite-dimensional real Euclidean spaces.
We consider the following linearly constrained convex composite optimization problem
\begin{equation}\label{eq:9}
\begin{aligned}
\min&{\;\:f_1(y_1)+f_2(y_1,y_2,\ldots,y_m)+g_1(z_1)+g_2(z_1,z_2,\ldots,z_n)}\\
\textrm{s.t.}\:&\;\: \mathcal{A}_1^*y_1+\mathcal{A}_2^*y_2+\ldots+\mathcal{A}_m^*y_m+\mathcal{B}_1^*z_1+\mathcal{B}_2^*z_2+\ldots+\mathcal{B}_n^*z_n=c,
\end{aligned}
\end{equation}
where $f_1:\mathcal{Y}_1\rightarrow \left ( -\infty ,+\infty  \right ]$ and $g_1:\mathcal{Z}_1\rightarrow \left ( -\infty ,+\infty  \right ]$ are simple closed proper convex (not necessarily smooth) functions; $f_2:\mathcal{Y}\rightarrow \left ( -\infty ,+\infty  \right )$ and $g_2:\mathcal{Z}\rightarrow \left ( -\infty ,+\infty  \right )$ are  continuously differentiable convex quadratic functions;  $\mathcal{A}_i^*:\mathcal{Y}_i\rightarrow \mathcal{X}$  and $\mathcal{B}_i^*:\mathcal{Z}_i\rightarrow \mathcal{X}$ are the adjoints of the linear operators  $\mathcal{A}_i:\mathcal{X}\rightarrow \mathcal{Y}_i$  and $\mathcal{B}_i:\mathcal{X}\rightarrow \mathcal{Z}_i$, respectively; $c\in \mathcal{X}$ is a given data.
For convenience, we denote  the linear maps $\mathcal{A}:\mathcal{X}\rightarrow \mathcal{Y}$ and $\mathcal{B}:\mathcal{X}\rightarrow \mathcal{Z}$ such that their adjoint maps are given by
\[\mathcal{A}^*y=\sum_{i=1}^{m}\mathcal{A}_i^*y_i,\quad\forall y\in\mathcal{Y},\quad\mathcal{B}^*z=\sum_{j=1}^{n}\mathcal{B}_i^*z_i,\quad\forall z\in\mathcal{Z}.\]
Accordingly, the model (\ref{eq:9}) could be transformed into the following form
\begin{equation}\label{eq:1}
	\begin{aligned}
		\min&{\;\:f(y)+g(z)}\\
		\textrm{s.t.}\:&\;\: \mathcal{A}^*y+\mathcal{B}^*z=c,
	\end{aligned}
\end{equation}
where $f(y):=f_1(y_1)+f_2(y_1,y_2,\ldots,y_m)$ and $g(z):=g_1(z_1)+g_2(z_1,z_2,\ldots,z_n)$ are closed proper convex functions in the form of `nonsmooth+quadratic'.

Given its structure, problem (\ref{eq:1}) covers a wide range of
apparently related formulations in different scientific fields, including deep learning, compressive sensing, sparse coefficient estimation, etc.
For solving (\ref{eq:1}), a frequently used benchmark is the alternating direction method of multipliers (ADMM) which was originally proposed by Glowinski \& Marroco \cite{glowinski1975approximation} and Gabay \& Mercier \cite{Gabay1976ADA}.
Starting from $(x^0,y^0,z^0)\in \X \times \Y \times\Z$, the  iterative scheme of ADMM for solving (\ref{eq:1}) takes the following form
\begin{equation}\label{eq:3c}
\left\{
\begin{array}{l}
	y^{k+1} =   \argmin\limits_{y\in\Y} \ f(y)-\langle \A x^k, y\rangle+\dfrac{\sigma }{2} \| \mathcal{A}^*y+\mathcal{B}^*z^k-c\|^2,\\ [5mm]
	z^{k+1}  =  \argmin\limits_{z\in\Z}g(z)-\langle\B x^k,z\rangle+\dfrac{\sigma }{2} \| \mathcal{A}^*y^{k+1}+\mathcal{B}^*z-c\|^2,\\ [5mm]
	x^{k+1}  = x^k-\tau \sigma (\mathcal{A}^*y^{k+1}+\mathcal{B}^*z^{k+1}-c),
\end{array}
\right.
\end{equation}
where $x\in\X$ is a multiplier, $\sigma>0$ is a penalty parameter, and $\tau$ is a steplenght within the interval $(0, (1+\sqrt{5})/2)$. For the historical developments and applications of ADMM, one may refer to the review papers \cite{Boyd2011DistributedOA,eckstein2012augmented,Glowinski2014OnAD,Han2022ASO}. Based on the good performance of ADMM, many variants are constructed. For example, Yang et al. \cite{Yang2017AHA} proposed a homotopy-based ADMM to solve the linearly constrained separable convex minimization problems. Wu et al. \cite{Wu2017LinearizedBA} used linearization technique to the subproblems of block-wise ADMM, and obtained three kinds of linearized block-wise ADMM. Incidentally, the iteration scheme (\ref{eq:3c}) is not always well-defined, one can consult a counter-example in \cite{chen2017note} for more details.

Interestingly,  Gabay \cite{gabay1983chapter} showed that the type of ADMM (\ref{eq:3c}) with a unit steplength is equivalent to the Douglas-Rachford splitting (DRs) method \cite{douglas1956numerical,Lions1979SplittingAF} for the sum of two maximal monotone operators.
And then, Eckstein \& Bertsekas \cite{Eckstein1992OnTD} showed that  the DRs is actually an application of the proximal point algorithm (PPA) \cite{Rockafellar1976AugmentedLA,Rockafellar1976MonotoneOA}. In light of this, Eckstein \& Bertsekas \cite{Eckstein1992OnTD} considered a variant of PPA and then applied it on the minimization problem (\ref{eq:1}) which leads to the following  generalized version of ADMM (GADMM):
\begin{equation}\label{eq:20}
	\left\{
	\begin{array}{l}
		y^{k+1}=\argmin\limits_{y\in\Y} \ f(y)-\langle \A x^k, y\rangle+\dfrac{\sigma }{2} \| \mathcal{A}^*y+\mathcal{B}^*z^k-c\|^2,\\ [5mm]
		z^{k+1}= \argmin\limits_{z\in\Z} \ g(z)-\left \langle \mathcal{B}x^k,z\right \rangle+\dfrac{\sigma }{2}\left \| \rho (\mathcal{A}^*y^{k+1}+\mathcal{B}^*z^k-c )+\mathcal{B}^*(z-z^k)\right \|^2,\\ [5mm]
		x^{k+1}=x^k-\sigma \left (\rho (\mathcal{A}^*y^{k+1}+\mathcal{B}^*z^{k}-c)+\mathcal{B}^*(z^{k+1}-z^k)  \right ),
	\end{array}
	\right.
\end{equation}
where $\rho\in\left( 0,2\right) $ is a relaxation factor.
It is quite clear that the GADMM (\ref{eq:20}) with a unit relaxation factor is just the case of ADMM  (\ref{eq:3c}) with $\tau=1$. As pointed out by Adona et al. \cite{adona2019iteration} that, the GADMM is actually an instance of a hybrid proximal
extra-gradient framework of  Monteiro \& Svaiter \cite{monteiro2010complexity}.

Comparing with (\ref{eq:3c}) and (\ref{eq:20}), we see that GADMM still retains the benefits of ADMM in treating $f$ and $g$ separately, but the relaxation factor may make this method more flexible and efficient. Some earlier numerical studies of GADMM  in this shell can be found in \cite{bertsekas1982constrained,Eckstein1994ParallelAD}.
Eckstien \cite{Eckstein1994SomeSS} suggested adding proximal terms to the subproblems contained in (\ref{eq:3c}) to make them easier to solve.
Then, Fazel et al. \cite{Fazel2013HankelMR} proved that these proximal terms can be relaxed to positive and semi-definite operators (named semi-proximal terms).
It should be emphasized that the semi-proximal terms allow the subproblems in (\ref{eq:3c}) to be decomposed into some smaller ones and then solved individually. For example, note $f(y)=f_1(y_1)+f_2(y_1,y_2,\ldots,y_m)$, then the $y$-subproblem can be attained via the following process
\begin{equation}\label{sgsi}
y_m\rightarrow y_{m-1}\rightarrow\cdots\rightarrow y_2\rightarrow y_1\rightarrow y_2 \cdots\rightarrow y_{m-1}\rightarrow y_m
\end{equation}
by fixing other blocks with their latest values, which means that the (\ref{eq:3c}) has the advantage to resolve the potentially nonsolvability issue of the subproblems.
This updating order is called symmetric Gauss-Seidel (sGS) iteration which was firstly appeared in \cite{sun2015convergent} and later was analyzed by Li et al. \cite{li2016schur,li2019block}.
For more details on the developments and applications of sGS, one may refer to the Ph.D. thesis \cite{li2015two} and the papers of \cite{li2016majorized,chen2017efficient,li2018qsdpnal,xiao2018generalized}.
From \cite[Theorem 1]{li2019block}, we know that this updating order (\ref{sgsi}) is actually equivalent to adding a special semi-proximal term, and hence,
the convergence can be easily obtained from  Fazel et al. \cite[Theorem B.1]{Fazel2013HankelMR}.

Inspired by the success of sGS technique, it is natural to add a pair of semi-proximal terms to the subproblems in (\ref{eq:20}) so that GADMM
possesses the abilities of handling
multi-block problems in the form of (\ref{eq:9}). Actually, this idea was firstly attempted by Fang et al. \cite{fang2015generalized} in the sense of adding a proximal term to the $z$-subproblem in (\ref{eq:20}), but the $y$-subproblem was ignored.
This defect was remedied immediately by Adona et al. \cite{adona2019iteration}.
Let $\mathcal{S:Y\rightarrow Y}$ and $\mathcal{T:Z\rightarrow Z}\;$ be a pair of self-adjoint positive semidefinite (not necessarily positive definite) linear operators, the generalized ADMM with semi-proximal terms (abbr. sPGADMM) for solving the convex composite  programming (\ref{eq:1}) can be described as follows:
\newpage
\begin{framed}
\noindent
{\bf Algorithm sPGADMM:}
\vskip 1.0mm \hrule \vskip 1mm
\noindent
{\bf Step 0}: Let $\sigma \in(0,+\infty)$ and $\rho \in (0,2)$ be given parameters. Choose $(y^0,z^0,x^0)\in\textrm{dom}\:(f)\times \textrm{dom}\:(g)\times \mathcal{X} $. For $k=0,1,2,\ldots$, do the following steps iteratively:\\
{\bf Step 1}: Compute
		\begin{subequations}
			\begin{numcases}{}
				y^{k+1} =   \underset{y\in\Y}{\arg\min}\;f(y)-\langle \A x^k, y\rangle+\dfrac{\sigma }{2} \| \mathcal{A}^*y+\mathcal{B}^*z^k-c\|^2+\frac{1}{2}\left \| y-y^k \right \|_\S^2,\label{eq:a}\\
				z^{k+1}  =  \underset{z\in\Z}{\arg\min}\;g(z)-\left\langle \mathcal{B}x^k,z\right\rangle\notag \\ \qquad\qquad +\frac{\sigma }{2}\left \| \rho (\mathcal{A}^*y^{k+1}+\mathcal{B}^*z^k-c )+\mathcal{B}^*(z-z^k)\right \|^2+\frac{1}{2}\left \| z-z^k \right \|_\T^2,\label{eq:b}\\
				x^{k+1}  := x^k-\sigma \Big(\rho (\mathcal{A}^*y^{k+1}+\mathcal{B}^*z^{k}-c)+\mathcal{B}^*(z^{k+1}-z^k)\Big). \label{eq:c}
			\end{numcases}
		\end{subequations}
{\bf Step 2}: If a termination criterion is not met, set $k:=k+1$ and go to {\bf Step 1}.
\end{framed}

In this paper, we only focus on the linear convergence rate of the sPGADMM, because it plays a pivotal role to measure an algorithm's behavior. We note that there is a smaller number of  papers being devoted to  the linear convergence rate of the GADMM (\ref{eq:20}) and its related variants.
For example,
Fang et al. \cite{fang2015generalized} proved the iteration complexity in both the ergodic and a nonergodic senses for a special case of sPGADMM, and then, the ergodic iteration complexity result was further improved by Adona et al. \cite{adona2019iteration}. Corman \& Yuan \cite{corman2014generalized} and Tao \& Yuan \cite{tao2018optimal} analyzed the linear convergence rate of GADMM (\ref{eq:20}) from the aspect of generalized PPA, which is only a particular instance of sPGADMM.
Peng \& Zhang \cite{penglinear} established the linear convergence rate of GADMM (\ref{eq:20}) for the sequence $\{(z^k,x^k)\}$ instead of the whole sequence $\{(y^k,z^k,x^k)\}$ itself.
In this shell, Peng et al. \cite{Peng2022TheLC} proved the linear convergence rate of sPGADMM under certain settings, however, both $\S$ and $\T$ must be positive definite operators.
Our focus of this paper is on the sPGADMM scheme (\ref{eq:a})-(\ref{eq:c}) with $\S$ and $\T$ being semi-positive definite, which covers almost all the works aforementioned. We attempt to  establish the linear rate of
sPGADMM under a calmness condition, which holds automatically for convex composite piecewise linear-quadratic programming.
Here, it is worth emphasizing that our convergence analysis is inspired by Han et al. \cite{han2018linear}  who  provided a general linear rate convergence analysis for the semi-proximal ADMM of Fazel et al. \cite{Fazel2013HankelMR}. Nevertheless, we show that the proof process is not trivial, because
the relaxation factor brings many technical difficulties.

The remaining parts of this paper are organized as follows. Section \ref{sec2} is divided into two parts: Subsection \ref{sec2.1} presents some preliminary results on the optimimality conditions for problem (\ref{eq:1}), and Subsection \ref{sec2.2} briefly reviews the concepts of locally upper Lipschitz continuity and the calmness for multi-valued mappings. In Section \ref{sec3}, we provide a particularly useful inequality for the iteration sequence generated by sPGADMM, which is the key to the global convergence of sPGADMM. Section \ref{sec4} is devoted to building up a general local linear convergence rate under an error bound condition, and then obtaining a global linear convergence rate. Finally, we conclude our paper with some remarks in Section \ref{sec5}.

\section{Preliminaries}\label{sec2}
In this section, we  introduce some notations in the context and summarize some useful preliminaries for later analysis.

\subsection{Optimality conditions}\label{sec2.1}
For any two vectors $x\in \mathbb{R}^n$\:and\:$y\in \mathbb{R}^m$, we use $(x,y)$ to denote their adjunction, i.e., $(x,y)=(x^\top,y^\top)^\top$. For $p\geq 1$, we use $ \| x  \|_p$ to denote an $\ell_p$-norm of a vector $x$, and  $ \| x \|$ is short for $\ell_2$-norm. Let $\mathcal{G}$ be a positive definite matrix, the $\mathcal{G}$-norm of $x$ is denoted by $\|x\|_\mathcal{G}:=\sqrt{x^\top\mathcal{G}x}$. Given a closed convex set $C$, the distance of $x$ to $C$ regarding $\G$-norm is denoted as $\text{dist}_\mathcal{G}(x,C):=\textrm{inf}_{y\in C}\| x-y\|_\mathcal{G}$.

Recall that $f$ and $g$ in (\ref{eq:1}) are closed proper convex functions, we denote their subdifferential mappings  by $\partial f$ and $\partial g$, respectively. Moreover,  the subdifferential mappings of the closed proper convex functions are maximal monotone \cite[Theorem 12.17]{Rockafellar1998VariationalA}, i.e., there exist two self-adjoint and positive semidefinite operators $\Sigma _f$ and $\Sigma _g$ such that for all $y, y'\in \textrm{dom} (f)$,  $\xi \in \partial f(y) $ and $ {\xi }'\in \partial f({y}')$,
\begin{equation}\label{eq:y}
	 \langle {\xi }'-\xi, {y}'-y\rangle\geq  \|  {y}'-y\|_{\Sigma _f}^2,
\end{equation}
and for all $z, z'\in \textrm{dom} (g)$,  $\zeta  \in \partial g(z) $ and ${\zeta  }'\in \partial g({z}')$,
\begin{equation}\label{eq:z}
  \langle  {\zeta  }'-\zeta , {z}'-z\rangle  \geq \|  {z}'-z \|_{\Sigma _g}^2.
\end{equation}
It follows from \cite[Corollaries 28.2.2 and 28.3.1]{rockafellar1970convex} that, $( \bar{y},\bar{z})\in \textrm{ri}(\textrm{dom}(f)\times \textrm{dom}(g))$ is an optimal solution to  problem (\ref{eq:1}) if and only if there exists a Lagrange multiplier  $\bar{x}\in \mathcal{X}$ such that $\left ( \bar{y},\bar{z},\bar{x}\right )$ satisfies the following Karush-Kuhn-Tucker (KKT) system:
\begin{equation}\label{eq:2}
	\mathcal{A}\bar{x}\in \partial f(\bar{y}),\; \; \mathcal{B}\bar{z}\in \partial g(\bar{z})\; \; \textrm{and}\; \; \mathcal{A}^*\bar{y}+\mathcal{B}^*\bar{z}-c=0.
\end{equation}
The solution set to the KKT system (\ref{eq:2}) is denoted by $\bar{\Omega }$. The nonempty of $\bar{\Omega}$ can be guaranteed if a certain constraint qualification such as the Slater condition holds, that is, there exists $( y^0,z^0)\in \textrm{ri}(\textrm{dom}(f)\times \textrm{dom}(g))$ such that $\A^*y^0+\B^*z^0=c$,
where $\text{ri}(\cdot)$ denotes the relative interior of a given convex set. In this paper, instead of using an explicit constraint qualification, we only make the following assumption on the existence of a KKT point.
\begin{assumption}\label{assumption1}
	The KKT system (\ref{eq:2}) has a nonempty solution set, i.e., $\bar{\Omega}\neq\varnothing$.
\end{assumption}
For notational convenience, we let $u:=(y,z,x)\in\U$ with $\mathcal{U:=Y\times Z\times X}$ such that $y\in \mathcal{Y},z\in \mathcal{Z}$, and $x\in \mathcal{X}$. Define the KKT mapping $R:\mathcal{U}\rightarrow \mathcal{U}$ as
\begin{equation}\label{eq:22}
	R(u):=\begin{pmatrix}
		y-\textrm{Prox}_f(y+\mathcal{A}x)\\[2mm]
		z-\textrm{Prox}_g(z+\mathcal{B}x)\\[2mm]
		\mathcal{A}^*y+\mathcal{B}^*z-c
	\end{pmatrix},\qquad\forall u\in \mathcal{U},
\end{equation}
where $\textrm{Prox}_f\left ( \cdot \right )$ represents the proximal mapping of a closed proper convex function $f$. From optimization theory, it was known that the proximal mappings $\textrm{Prox}_f( \cdot)$ and $\textrm{Prox}_g ( \cdot )$ are globally Lipschitz continuous, which means that the mapping $R(\cdot)$ is continuous on $\mathcal{U}$ and $R(u)=0$ if and only if $u\in\bar{\Omega}$.
The KKT mapping of (\ref{eq:22}) originated from Han et al. \cite{han2018linear}, which plays a vital role in our subsequent analysis.

\subsection{Calmness}\label{sec2.2}
Let $F:\mathcal{X}\rightrightarrows \mathcal{Y}$ be a set-valued mapping with its graph denoted by gph$F$, and let $\mathbf{B}_\mathcal{Y}$ be a unit ball in $\mathcal{Y}$.
In the first place, we give the definition of locally upper Lipschitz continuity.

\begin{definition}[\cite{robinson1976implicit}]
	The multi-valued mapping $F:\mathcal{X}\rightrightarrows \mathcal{Y}$ is said to be Locally upper Lipschitz continuity at $x^0\in\mathcal{X}$ if there exist a constant $k_0>0$ along with a neighborhood $V$ of $x_0$ such that
	\[F\left ( x \right )\subseteq F\left ( x^0 \right )+k_0\left \| x-x^0 \right \|\mathbf{B}_\mathcal{Y},\; \; \; \; \forall x\in V.\]
\end{definition}
\noindent  It was known from Robinson \cite{robinson1981some} that, if a multivalued mapping $F:\mathcal{X}\rightrightarrows \mathcal{Y}$ is piecewise polyhedral, then $F$ is locally upper Lipschitz continuous at any $x^0\in\mathcal{X}$ with modulus $k_0$ independent of the choice of $x^0$.
A closed proper convex function $f:\mathcal{X}\rightarrow (-\infty ,+\infty ]$ is said to be piecewise linear-quadratic if dom$(f)$ is an union of finitely many polyhedral sets and on each of these polyhedral sets, $f$ is either an affine or a quadratic function. Sun \cite{sun1986monotropic} proved that, $f$ is piecewise linear-quadratic if and only if the graph of its subdifferential mapping $\partial f$ is piecewise polyhedral.
For more details, one may refer to Rockafellar \& Wets \cite[propositions 12.30 and 11.14]{Rockafellar1998VariationalA}

In the second place, we give the definition of calmness for  $F:\mathcal{X}\rightrightarrows \mathcal{Y}$ at $x^0$ for $y^0$ with $(x^0,y^0)\in\textrm{gph}F$. For more details on calmness, one can see  Dontchev \& Rockafellar \cite[Section 3.8 (3H)]{dontchev2009implicit} and Rockafellar \& Wets \cite{Rockafellar1998VariationalA} .
\begin{definition}[\cite{dontchev2009implicit,Rockafellar1998VariationalA}]
	Let $(x^0,y^0)\in\textrm{gph}F$. The multi-valued mapping $F:\mathcal{X}\rightrightarrows \mathcal{Y}$ is said to be calm at  $x^0$ for $y^0$ with modulus $k_0\geq  0$ if there exist a neighborhood $V$ of $x^0$ and a neighborhood $W$ of $y^0$ such that\[F\left ( x \right )\cap  W\subseteq F\left ( x^0 \right )+k_0\left \| x-x^0 \right \|\mathbf{B}_\mathcal{Y},\; \; \; \; \forall x\in V.\]
\end{definition}
\noindent  Therefore, we get that the subdifferential mapping  $\partial f(=:F)$ of a convex piecewise linear-quadratic function $f$ is calm at $x^0$ for $y^0$ meeting $(x^0,y^0)\in\textrm{gph}F$ with modulus $k_0\geq  0$ independent of the selection of $(x^0,y^0)$. Besides, it is also known from Dontchev \& Rockafellar \cite[Theorem 3H.3]{dontchev2009implicit} that, for any $(x^0,y^0)\in\textrm{gph}F$, the mapping $F$ is calm at $x^0$ for $y^0$ if and only if $F^{-1}$ (the inverse mapping of $F$) is metrically subregular at $y^0$ for $x^0$, i.e., there exist a constant $k'_0\geq  0$, a neighborhood $W$ of $y^0$, and a neighborhood $V$ of $x^0$ such that
\begin{equation}
	\textrm{dist}(y,F(x^0))\leq k'_0\textrm{dist}(x^0,F^{-1}(y)\cap V),\; \; \; \forall y\in W,
\end{equation}
where $\text{dist}(\cdot,\cdot)$ represents a distance regarding an appropriate norm.
\section{Global convergence of sPGADMM}\label{sec3}
In order to prove the linear convergence rate, we must establish the global convergence of sPGADMM because it is essential to our subsequent analysis. Certainly, the convergence can be followed directly from the framework of PPA, but there is no  proof for optimization problems especially including a pair of semi-definite operators $\S$ and $\T$ in the current literature.
In our analysis, the following elementary equality is used times and times again
\begin{equation}\label{eq:7}
	2\left \langle v_1,\mathcal{G}v_2\right \rangle=\left \| v_1 \right \|_\mathcal{G}^2+\left \|v_2\right \|_\mathcal{G}^2-\left \| v_1 -v_2\right \|_\mathcal{G}^2=\left \| v_1 +v_2\right \|_\mathcal{G}^2-\left \| v_1 \right \|_\mathcal{G}^2-\left \| v_2\right \|_\mathcal{G}^2,
\end{equation}
where $v_1$ and $v_2$ are vectors in the same finite dimensional real Euclidean space endowed with inner product $\left \langle \cdot,\cdot \right \rangle$ and induced norm $\left \| \cdot \right \|$, and $\mathcal{G}$ is an arbitrary self-adjoint positive semi-definite linear operator.

For notational convenience, we denote $y_e=y-\bar{y}$, $z_e=z-\bar{z}$, and $x_e=x-\bar{x}$.  Now, we are ready to give some inequalities used in the later analysis.

\begin{lemma}\label{lemma1}
	Let $\left \{ u^k\right \}$ be the sequence generated by the sPGADMM scheme \eqref{eq:a}-\eqref{eq:c}. For any $k\geq 0$, the following results hold:\\
	\begin{equation}\label{eq:4}	
			\langle \mathcal{B}^* ( z^{k+1}-z^k ),x^{k+1}-x^k  \rangle\geq \frac{1}{2} \Big( \|z^{k+1}-z^k\| _\T^2- \|z^{k}-z^{k-1}  \| _\T^2\Big),
	\end{equation}
and
\begin{equation}\label{eq:5}
	\begin{aligned}
		&\Big\langle x_e^{k+1}+\sigma ( 1-\rho)\mathcal{B}^*(z^{k+1}-z^k) ,\mathcal{A}^*y_e^{k+1} +\mathcal{B}^*z_e^{k+1} \Big\rangle+\frac{1}{2}\sigma \rho \| \mathcal{A}^*y_e^{k+1} +\mathcal{B}^*z_e^{k+1}  \|^2\\
	=	& ( 2\sigma \rho )^{-1} \bigg[ \| x_e^{k}+\sigma ( 1-\rho )\mathcal{B}^*z_e^{k}  \|^2 - \| x_e^{k+1}+\sigma( 1-\rho)\mathcal{B}^*z_e^{k+1} \|^2\bigg  ],		
	\end{aligned}
\end{equation}
and
\begin{equation}\label{eq:6}
	\begin{aligned}
		& \langle \mathcal{B}^*( z_e^{k+1}-z_e^k ),\mathcal{A}^*y_e^{k+1} \rangle - ( 2\rho )^{-1} ( 2-\rho) \| \mathcal{B}^* (z_e^{k+1}- z_e^k ) \|^2\\
		&\leq \frac{1}{2}\Big ( \| \mathcal{B}^*z_e^k  \|^2- \| \mathcal{B}^*z_e^{k+1}  \|^2\Big)+\frac{1}{2} ( \sigma \rho)^{-1}\Big ( \| z^k-z^{k-1} \|  _\T^2-\| z^{k+1}-z^k \|_\T^2\Big ).
	\end{aligned}
\end{equation}
\end{lemma}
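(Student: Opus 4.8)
The plan is to derive all three inequalities from the first-order optimality conditions of the $y$- and $z$-subproblems \eqref{eq:a}--\eqref{eq:b}, combined with the multiplier update \eqref{eq:c} and the polarization identity \eqref{eq:7}. First I would record the optimality condition of \eqref{eq:b}: there is a subgradient $\zeta^{k+1}\in\partial g(z^{k+1})$ with
\[
\zeta^{k+1}-\mathcal{B}x^k+\sigma\mathcal{B}\big(\rho(\mathcal{A}^*y^{k+1}+\mathcal{B}^*z^k-c)+\mathcal{B}^*(z^{k+1}-z^k)\big)+\mathcal{T}(z^{k+1}-z^k)=0.
\]
Using \eqref{eq:c} this collapses to $\zeta^{k+1}=\mathcal{B}x^{k+1}-\mathcal{T}(z^{k+1}-z^k)$, so that $\mathcal{B}x^{k+1}-\mathcal{T}(z^{k+1}-z^k)\in\partial g(z^{k+1})$; similarly at iteration $k$. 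Subtracting these two inclusions and pairing with $z^{k+1}-z^k$, monotonicity of $\partial g$ (in fact \eqref{eq:z}) gives
\[
\langle \mathcal{B}(x^{k+1}-x^k),z^{k+1}-z^k\rangle \ge \langle \mathcal{T}(z^{k+1}-z^k)-\mathcal{T}(z^k-z^{k-1}),z^{k+1}-z^k\rangle.
\]
Moving the adjoint to the other side on the left and applying \eqref{eq:7} to the $\mathcal{T}$-inner product on the right (dropping the nonnegative term $\|z^{k+1}-z^k-(z^k-z^{k-1})\|_{\mathcal{T}}^2$) yields exactly \eqref{eq:4}.

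For \eqref{eq:5} I would work at the level of the KKT point $\bar u$. From the KKT system \eqref{eq:2}, $\mathcal{A}\bar x\in\partial f(\bar y)$ and $\mathcal{B}\bar x\in\partial g(\bar z)$; the $y$-optimality of \eqref{eq:a} gives (again after cleaning up with the update) $\mathcal{A}\tilde x^{k+1}-\mathcal{S}(y^{k+1}-y^k)\in\partial f(y^{k+1})$ for the ``intermediate'' multiplier $\tilde x^{k+1}:=x^k-\sigma\rho(\mathcal{A}^*y^{k+1}+\mathcal{B}^*z^k-c)$, and the $z$-optimality gives $\mathcal{B}x^{k+1}-\mathcal{T}(z^{k+1}-z^k)\in\partial g(z^{k+1})$. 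The crucial algebraic observation is that \eqref{eq:c} can be rewritten as
\[
x^{k+1}=\tilde x^{k+1}-\sigma\mathcal{B}^*(z^{k+1}-z^k),\qquad\text{equivalently}\qquad x^{k+1}+\sigma(1-\rho)\mathcal{B}^*z^{k+1}=x^k+\sigma(1-\rho)\mathcal{B}^*z^k-\sigma\rho\big(\mathcal{A}^*y^{k+1}+\mathcal{B}^*z^{k+1}-c\big),
\]
so that the quantity $w^k:=x_e^k+\sigma(1-\rho)\mathcal{B}^*z_e^k$ satisfies $w^{k+1}=w^k-\sigma\rho(\mathcal{A}^*y_e^{k+1}+\mathcal{B}^*z_e^{k+1})$ (using $\mathcal{A}^*\bar y+\mathcal{B}^*\bar z=c$). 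Applying the second form of identity \eqref{eq:7} to $\|w^{k+1}\|^2=\|w^k-\sigma\rho(\mathcal{A}^*y_e^{k+1}+\mathcal{B}^*z_e^{k+1})\|^2$ and rearranging produces precisely the identity \eqref{eq:5}; note this step is an exact identity, so no subgradient information is even needed here — it is pure bookkeeping on the update formula.

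Finally, \eqref{eq:6} is obtained by combining the $z$-subproblem optimality with \eqref{eq:4}-type reasoning applied to the error variables. Writing the $z$-inclusion at $\bar z$ and at $z^{k+1}$, subtracting, and pairing with $z_e^{k+1}$ gives, via \eqref{eq:z},
\[
\langle \mathcal{B}x_e^{k+1}-\mathcal{T}(z^{k+1}-z^k),z_e^{k+1}\rangle\ge 0,
\]
i.e. $\langle \mathcal{B}^*z_e^{k+1},x_e^{k+1}\rangle\ge \langle \mathcal{T}(z^{k+1}-z^k),z_e^{k+1}\rangle$. Then I would substitute $x_e^{k+1}=x_e^k-\sigma\rho(\mathcal{A}^*y_e^{k+1}+\mathcal{B}^*z_e^k)-\sigma\mathcal{B}^*(z^{k+1}-z^k)$, expand all inner products, and use \eqref{eq:7} twice: once on the $\langle\mathcal{B}^*z_e^k,\mathcal{B}^*z_e^{k+1}\rangle$-type cross term to generate the telescoping $\|\mathcal{B}^*z_e^k\|^2-\|\mathcal{B}^*z_e^{k+1}\|^2$, and once on the $\mathcal{T}$-term to generate the telescoping $\|z^k-z^{k-1}\|_\mathcal{T}^2-\|z^{k+1}-z^k\|_\mathcal{T}^2$ (here \eqref{eq:4} is what lets one replace a $\langle\mathcal{T}(z^{k+1}-z^k),z^{k+1}-z^k\rangle$-type quantity by its telescoped lower bound). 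Collecting the quadratic remainder terms into the coefficient $(2\rho)^{-1}(2-\rho)\|\mathcal{B}^*(z_e^{k+1}-z_e^k)\|^2$ and dividing through by $\sigma\rho$ where appropriate gives \eqref{eq:6}.

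The main obstacle I anticipate is the bookkeeping in \eqref{eq:6}: one must choose the substitution for $x_e^{k+1}$ and the grouping of the several cross terms so that the leftover squared terms assemble into exactly the stated coefficient $(2\rho)^{-1}(2-\rho)$ — this is where the relaxation factor $\rho$ makes the algebra genuinely more delicate than in the classical ADMM case, and a careless split produces a worse constant. A secondary subtlety is the very first step of \eqref{eq:4} for $k=0$, which tacitly requires interpreting $z^{-1}$ consistently (e.g. $z^{-1}:=z^0$), so that the $\|z^0-z^{-1}\|_\mathcal{T}^2$ term vanishes; I would state this convention explicitly at the outset.
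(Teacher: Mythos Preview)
Your arguments for \eqref{eq:4} and \eqref{eq:5} are correct and coincide with the paper's proof: the $z$-optimality plus monotonicity plus \eqref{eq:7} gives \eqref{eq:4}, and the recursion $w^{k+1}=w^k-\sigma\rho(\mathcal{A}^*y_e^{k+1}+\mathcal{B}^*z_e^{k+1})$ for $w^k:=x_e^k+\sigma(1-\rho)\mathcal{B}^*z_e^k$ together with the polarization identity yields \eqref{eq:5} exactly as you describe.

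Your route to \eqref{eq:6}, however, has a gap. Pairing the $z$-inclusion with $z_e^{k+1}$ and then substituting for $x_e^{k+1}$ produces the inner product $\langle\mathcal{B}^*z_e^{k+1},\mathcal{A}^*y_e^{k+1}\rangle$, not the one that actually appears in \eqref{eq:6}, namely $\langle\mathcal{B}^*(z_e^{k+1}-z_e^k),\mathcal{A}^*y_e^{k+1}\rangle$; and it leaves behind a dangling term $\langle\mathcal{B}^*z_e^{k+1},x_e^k\rangle$ that neither telescopes nor is bounded by anything at hand. You also mis-describe the role of \eqref{eq:4}: it bounds $\langle\mathcal{B}^*(z^{k+1}-z^k),x^{k+1}-x^k\rangle$, not a $\mathcal{T}$-quadratic form, so it cannot ``replace a $\langle\mathcal{T}(z^{k+1}-z^k),z^{k+1}-z^k\rangle$-type quantity'' as you write.

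The paper's derivation of \eqref{eq:6} is, like that of \eqref{eq:5}, essentially pure algebra on the update rule: from the recursion you already identified one solves for
\[
\mathcal{A}^*y_e^{k+1}=(\sigma\rho)^{-1}\bigl[x_e^k-x_e^{k+1}+\sigma(1-\rho)\mathcal{B}^*(z_e^k-z_e^{k+1})\bigr]-\mathcal{B}^*z_e^{k+1},
\]
takes the inner product with $\mathcal{B}^*(z_e^{k+1}-z_e^k)$, applies \eqref{eq:7} to the cross term $\langle\mathcal{B}^*(z_e^{k+1}-z_e^k),\mathcal{B}^*z_e^{k+1}\rangle$, and then bounds the single remaining term $(\sigma\rho)^{-1}\langle\mathcal{B}^*(z^{k+1}-z^k),x^k-x^{k+1}\rangle$ using \eqref{eq:4}. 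No further use of the $g$-subgradient inequality is needed beyond what is already encoded in \eqref{eq:4}, and the coefficient $(2\rho)^{-1}(2-\rho)$ drops out immediately as $\tfrac12+\rho^{-1}(1-\rho)$.
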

\begin{proof}
	Firstly, invoking the first-order optimality condition for (\ref{eq:b}), we get
	\[\mathcal{B}\left [ x^k-\sigma \rho \left ( \mathcal{A}^* y^{k+1}+\mathcal{B}^*z^{k+1}-c\right )+\sigma \left ( 1-\rho  \right )\mathcal{B}^*(z^{k+1}-z^k) \right ]-\T\left ( z^{k+1}-z^k \right )\in \partial g(z^{k+1}),\]
	which, together with (\ref{eq:c}), implies
	\begin{equation}\label{eq:3}
		\mathcal{B}x^{k+1}-T(z^{k+1}-z^k)\in \partial g(z^{k+1}),
	\end{equation}
	and
	\[\mathcal{B}x^k-T(z^k-z^{k-1})\in \partial g(z^k).\]
	From the maximal monotonicity of $\partial g(\cdot)$, we have
	\begin{equation*}
		\left \langle \mathcal{B}^*\left ( z^{k+1}-z^k \right ),x^{k+1}-x^k \right \rangle-\left \langle \T\left ( z^{k+1}-z^k \right )-\T\left ( z^k-z^{k-1}  \right ),z^{k+1}-z^k  \right \rangle\geq 0.
	\end{equation*}
    The above inequality yields
   \begin{equation*}
   	\begin{aligned}
   		\left \langle \mathcal{B}^*\left ( z^{k+1}-z^k \right ),x^{k+1}-x^k \right \rangle&\geq \left \langle \T\left ( z^{k+1}-z^k \right )-\T\left ( z^k-z^{k-1}  \right ),z^{k+1}-z^k  \right \rangle\\
   		&\geq \left \| z^{k+1}-z^k \right \|_\T^2-\frac{1}{2}\left ( \left \| z^{k+1}-z^k \right \|_\T^2+\left \| z^{k}-z^{k-1} \right \|_\T^2 \right )\\
   		&=\frac{1}{2}\left ( \left \| z^{k+1}-z^k \right \|_\T^2-\left \| z^{k}-z^{k-1} \right \|_\T^2 \right ),
   	\end{aligned}
   \end{equation*}
   which indicates that (\ref{eq:4}) is true.

   Secondly, it follows from (\ref{eq:c}) that
   \[x^k=x^{k+1}+\sigma \rho \left ( \mathcal{A}^*y^{k+1}+\mathcal{B}^*z^{k+1}-c \right )+\sigma \left ( 1-\rho  \right )\mathcal{B}^*(z^{k+1}-z^k).\]
   It is easy to see that
   \begin{equation}\label{eq:8}
   	\left [x_e^k+\sigma \left ( 1-\rho  \right )\mathcal{B}^*z_e^k  \right ]-\left [x_e^{k+1}+\sigma \left ( 1-\rho  \right )\mathcal{B}^*z_e^{k+1}  \right ]=\sigma \rho \left ( \mathcal{A}^*y_e^{k+1}+\mathcal{B}^*z_e^{k+1} \right ).
   \end{equation}
   Subsequently, using the relation (\ref{eq:8}) and the elementary equality (\ref{eq:7}), we know that the equality (\ref{eq:5}) holds.

   Finally, according to (\ref{eq:8}), we can deduce that
   \begin{equation*}
   	\begin{aligned}
   		\mathcal{A}^*y_e^{k+1}&=\left (\mathcal{A}^*y_e^{k+1}+\mathcal{B}^*z_e^{k+1}  \right )-\mathcal{B}^*z_e^{k+1}\\
   		&=\left ( \sigma \rho   \right )^{-1}\left [x_e^k-x_e^{k+1}+\sigma \left ( 1-\rho  \right )\left (\mathcal{B}^*z_e^k - \mathcal{B}^*z_e^{k+1}\right ) \right ]-\mathcal{B}^*z_e^{k+1}.
   	\end{aligned}
   \end{equation*}
   Combining this equality with (\ref{eq:7}), we  obtain that
   \begin{equation*}
   	\begin{split}
   		&\left \langle  \mathcal{B}^*\left (z_e^{k+1}- z_e^k \right ),\mathcal{A}^*y_e^{k+1}\right \rangle\\
   		=&\left ( \sigma \rho   \right )^{-1}\left \langle  \mathcal{B}^*\left (z_e^{k+1}- z_e^k \right ),x_e^k-x_e^{k+1}  \right \rangle-\left \langle \mathcal{B}^*\left (z_e^{k+1}- z_e^k \right ),\mathcal{B}^*z_e^{k+1} \right \rangle\\
   		&+\rho ^{-1} \left ( 1-\rho  \right )\left \langle  \mathcal{B}^*\left (z_e^{k+1}- z_e^k \right ),\mathcal{B}^*\left (z_e^k - z_e^{k+1}  \right ) \right \rangle\\
   		=&\left ( \sigma \rho   \right )^{-1}\left \langle  \mathcal{B}^*\left (z_e^{k+1}- z_e^k \right ),x_e^k-x_e^{k+1}  \right \rangle\\
   		& +\frac{1}{2}\left (\left \|\mathcal{B}^*z_e^k \right \|^2-\left \|\mathcal{B}^*z_e^{k+1}   \right \|^2  \right )-\left ( 2\rho  \right )^{-1}\left ( 2-\rho  \right )\left \| \mathcal{B}^*\left (z_e^{k+1}- z_e^k \right ) \right \|^2,
   	\end{split}
   \end{equation*}
   which, together with (\ref{eq:4}), implies the inequality (\ref{eq:6}).
\end{proof}

Utilizing Lemma \ref{lemma1}, we  can establish the difference of the distance to $\bar{\Omega }$ for two consecutive points of the sequence $\left \{ u^k\right \}$.
\begin{lemma}\label{lemma2}
	Let the sequence $\left \{ u^k\right \}$ be generated by sPGADMM  \eqref{eq:a}-\eqref{eq:c}. For any $k\geq 0$, define
	\begin{equation}\label{eq:10}
		\begin{cases}
			\phi _{k+1}:=\left ( \sigma \rho  \right )^{-1}\left \| x_e^{k+1}+\sigma \left ( 1-\rho  \right )\mathcal{B}^*z_e^{k+1} \right \|^2+\left \| y_e^{k+1} \right \|_S^2+\left \| z_e^{k+1} \right \|_\T^2\\[2mm]
			\qquad\qquad+\sigma \left ( 2-\rho  \right )\left \| \mathcal{B}^*z_e^ {k+1}\right \|^2+\left ( 2-\rho  \right )\rho ^{-1}\left \| z_e^{k+1}-z_e^k \right \|_\T^2,\\[2mm]
			t_{k+1}:=2\left \| y_e^{k+1} \right \|_{\Sigma _f}^2+2\left \| z_e^{k+1} \right \|_{\Sigma _g}^2+\left \| y^{k+1}-y^k \right \|_S^2+\left \| z^{k+1}-z^k \right \|_\T^2\\[2mm]
			\qquad\qquad+\sigma \left ( 2-\rho  \right )^2\rho ^{-1}\left \|\mathcal{B}^*\left ( z^{k+1}-z^k   \right )\right \|^2.
		\end{cases}
	\end{equation}
	Then, it holds that
	\begin{equation}\label{eq:11}
		\phi _k-\phi _{k+1}\geq t_{k+1}+(2-\rho )\sigma \left \| \mathcal{A}^*y_e^{k+1}+\mathcal{B}^*z_e^{k+1} \right \|^2.
	\end{equation}
\end{lemma}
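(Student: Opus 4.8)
The plan is to derive \eqref{eq:11} by combining the first-order optimality conditions of the two subproblems with the KKT system \eqref{eq:2} at an arbitrarily fixed $\bar u=(\bar y,\bar z,\bar x)\in\bar\Omega$, then invoking the maximal monotonicity inequalities \eqref{eq:y}--\eqref{eq:z}, and finally converting every resulting cross term into a telescoping difference of the quantities in \eqref{eq:10} by means of Lemma~\ref{lemma1} and the elementary identity \eqref{eq:7}.

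First I would record the optimality conditions at step $k$. For the $z$-subproblem this is already available as \eqref{eq:3} (derived in the proof of Lemma~\ref{lemma1}), namely $\B x^{k+1}-\T(z^{k+1}-z^k)\in\partial g(z^{k+1})$. For the $y$-subproblem, the first-order condition for \eqref{eq:a} reads $\A\big(x^k-\sigma(\A^*y^{k+1}+\B^*z^k-c)\big)-\S(y^{k+1}-y^k)\in\partial f(y^{k+1})$; eliminating $x^k$ through \eqref{eq:c} and using $\A^*y^{k+1}+\B^*z^k-c=(\A^*y_e^{k+1}+\B^*z_e^{k+1})-\B^*(z^{k+1}-z^k)$ (which relies on $\A^*\bar y+\B^*\bar z=c$), this can be rewritten as
\[\A\big(x^{k+1}+\sigma(\rho-1)(\A^*y_e^{k+1}+\B^*z_e^{k+1})+\sigma(2-\rho)\B^*(z^{k+1}-z^k)\big)-\S(y^{k+1}-y^k)\in\partial f(y^{k+1}).\]
Since $\A\bar x\in\partial f(\bar y)$ and $\B\bar x\in\partial g(\bar z)$ by \eqref{eq:2}, I would apply \eqref{eq:y} to $(y^{k+1},\bar y)$ and \eqref{eq:z} to $(z^{k+1},\bar z)$, add the two inequalities, and multiply by $2$. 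This produces an estimate with right-hand side $2\|y_e^{k+1}\|_{\Sigma_f}^2+2\|z_e^{k+1}\|_{\Sigma_g}^2$ and left-hand side equal to $2\langle x_e^{k+1},\A^*y_e^{k+1}+\B^*z_e^{k+1}\rangle$ plus the proximal terms $-2\langle\S(y^{k+1}-y^k),y_e^{k+1}\rangle-2\langle\T(z^{k+1}-z^k),z_e^{k+1}\rangle$ and the cross terms $2\sigma(\rho-1)\langle\A^*y_e^{k+1}+\B^*z_e^{k+1},\A^*y_e^{k+1}\rangle$ and $2\sigma(2-\rho)\langle\B^*(z^{k+1}-z^k),\A^*y_e^{k+1}\rangle$ carried by the two residual summands of the effective multiplier.

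The remaining work is to rewrite this left-hand side piece by piece. The two proximal inner products are split by \eqref{eq:7} (with $v_1=y^{k+1}-y^k$, $v_2=y_e^{k+1}$, so $v_1-v_2=-y_e^k$, and analogously for $z$), which accounts for the $\|y_e^{k+1}\|_\S^2$, $\|z_e^{k+1}\|_\T^2$ entries of $\phi$ and the $\|y^{k+1}-y^k\|_\S^2$, $\|z^{k+1}-z^k\|_\T^2$ entries of $t_{k+1}$. The term $2\langle x_e^{k+1},\A^*y_e^{k+1}+\B^*z_e^{k+1}\rangle$ together with part of the $\rho\neq1$ cross terms is regrouped into $2\langle x_e^{k+1}+\sigma(1-\rho)\B^*(z^{k+1}-z^k),\A^*y_e^{k+1}+\B^*z_e^{k+1}\rangle$, and \eqref{eq:5} (forced to be used with multiplier $2$ to match the factor $(\sigma\rho)^{-1}$ in $\phi$) turns this into the difference of $(\sigma\rho)^{-1}\|x_e^k+\sigma(1-\rho)\B^*z_e^k\|^2$ and $(\sigma\rho)^{-1}\|x_e^{k+1}+\sigma(1-\rho)\B^*z_e^{k+1}\|^2$ minus $\sigma\rho\|\A^*y_e^{k+1}+\B^*z_e^{k+1}\|^2$. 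The still-remaining cross term reduces, via $z^{k+1}-z^k=z_e^{k+1}-z_e^k$, to a multiple of $\langle\B^*(z_e^{k+1}-z_e^k),\A^*y_e^{k+1}\rangle$, which is bounded by \eqref{eq:6} used with multiplier $2\sigma(2-\rho)$; this is exactly what produces the $\sigma(2-\rho)(\|\B^*z_e^k\|^2-\|\B^*z_e^{k+1}\|^2)$ and $(2-\rho)\rho^{-1}(\|z^k-z^{k-1}\|_\T^2-\|z^{k+1}-z^k\|_\T^2)$ pieces of $\phi_k-\phi_{k+1}$, while the leftover $\|\B^*(z^{k+1}-z^k)\|^2$ assembles the $\sigma(2-\rho)^2\rho^{-1}\|\B^*(z^{k+1}-z^k)\|^2$ term of $t_{k+1}$. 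Collecting all contributions, dropping the nonnegative remainders (nonnegative because $\rho\in(0,2)$ and $\Sigma_f,\Sigma_g,\S,\T$ are positive semidefinite), and checking that the residual coefficient of $\|\A^*y_e^{k+1}+\B^*z_e^{k+1}\|^2$ tallies to $(2-\rho)\sigma$, one obtains \eqref{eq:11}.

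The step I expect to be the main obstacle is precisely this last coefficient bookkeeping. Because the $y$-condition carries the extra summand $\sigma(\rho-1)(\A^*y_e^{k+1}+\B^*z_e^{k+1})$ in its effective multiplier while the $z$-condition does not, the cross term $\sigma(\rho-1)\langle\A^*y_e^{k+1}+\B^*z_e^{k+1},\A^*y_e^{k+1}\rangle$ does not collapse into a multiple of $\|\A^*y_e^{k+1}+\B^*z_e^{k+1}\|^2$ on its own; I would split it through $\langle\A^*y_e^{k+1}+\B^*z_e^{k+1},\A^*y_e^{k+1}\rangle=\|\A^*y_e^{k+1}+\B^*z_e^{k+1}\|^2-\langle\A^*y_e^{k+1}+\B^*z_e^{k+1},\B^*z_e^{k+1}\rangle$ and substitute $\B^*z_e^{k+1}=\B^*z_e^k+\B^*(z^{k+1}-z^k)$ so that the pieces feed into \eqref{eq:5} and \eqref{eq:6}. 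Verifying that, after all these rearrangements, no sign-indefinite inner product such as $\langle\A^*y_e^{k+1},\B^*z_e^{k+1}\rangle$ survives — and that for $\rho=1$ the whole identity degenerates to the semi-proximal ADMM estimate of Han et al. \cite{han2018linear} — is the delicate part; as remarked in the introduction, it is here that ``the relaxation factor brings many technical difficulties.''
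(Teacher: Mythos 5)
Your proposal follows essentially the same route as the paper: the same rewritten optimality conditions \eqref{eq:3} and \eqref{eq:12}, the same application of the monotonicity inequalities \eqref{eq:y}--\eqref{eq:z} at $\bar u$, the same regrouping of the $\sigma(\rho-1)$ cross term into the form handled by \eqref{eq:5}, and the same use of \eqref{eq:6} and the identity \eqref{eq:7} to telescope the remaining pieces into $\phi_k-\phi_{k+1}$ and $t_{k+1}$. The only cosmetic slip is writing the regrouped inner product with $\mathcal{B}^*(z^{k+1}-z^k)$ where the algebra (and the paper's own \eqref{eq:14}) produces $\mathcal{B}^*z_e^{k+1}$ — a discrepancy already present in the paper's statement of \eqref{eq:5} — and your stated conclusion of that step is the correct one.
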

\begin{proof}
	Notice that (\ref{eq:a}) can be rewritten as
	$$
	\mathcal{A}\left [ x^k-\sigma \left ( \mathcal{A}^* y^{k+1}+\mathcal{B}^*z^{k+1}-c\right )+\sigma \mathcal{B}^*\left ( z^{k+1} -z^k\right ) \right ]-S\left ( y^{k+1}-y^k \right )\in \partial f(y^{k+1}).
	$$
    It follows from (\ref{eq:c}) that
    \begin{equation}\label{eq:12}
    \begin{aligned}
    		\mathcal{A}&\left [ x^{k+1}  -\sigma \left ( 1-\rho  \right ) \left ( \mathcal{A}^*y^{k+1}+\mathcal{B}^*z^{k+1}-c \right ) +\sigma \left ( 2-\rho  \right )\mathcal{B}^*\left ( z^{k+1}-z^k \right )\right ]\\[2mm]
    &-S\left ( y^{k+1}-y^k \right )\in \partial f(y^{k+1}).
    \end{aligned}
    \end{equation}
   Combining (\ref{eq:y}) and (\ref{eq:z}) with (\ref{eq:3}) and (\ref{eq:12}), we have
   \begin{equation*}
   	\begin{aligned}
   		\left \langle x_e^{k+1},\mathcal{A}^*y_e^{k+1} \right \rangle &-\sigma \left ( 1-\rho  \right )\left \langle  \mathcal{A}^*y_e^{k+1}+\mathcal{B}^*z_e^{k+1} , \mathcal{A}^*y_e^{k+1}\right \rangle-\left \langle  S\left ( y_e^{k+1}-y_e^k \right ), y_e^{k+1}\right \rangle\\[2mm]
   		&+\sigma \left ( 2-\rho  \right ) \left \langle \mathcal{B}^*\left ( z_e^{k+1}-z_e^k \right ),\mathcal{A}^*y_e^{k+1}  \right \rangle \geq \left \| y_e^{k+1} \right \|_{\Sigma _f}^2,
   	\end{aligned}
   \end{equation*}
and
   $$
   \left \langle  x_e^{k+1},\mathcal{B}^*z_e^{k+1}\right \rangle-\left \langle  T\left ( z_e^{k+1}-z_e^k \right ), z_e^{k+1} \right \rangle\geq \left \| z_e^{k+1} \right \|_{\Sigma _g}^2.
   $$
   Adding both sides of the inequalities, we have
   \begin{equation}\label{eq:13}
   	\begin{aligned}
   		&\left \langle x_e^{k+1},\mathcal{A}^*y_e^{k+1} +\mathcal{B}^*z_e^{k+1} \right \rangle-\sigma \left ( 1-\rho  \right )\left \langle \mathcal{A}^*y_e^{k+1}+\mathcal{B}^*z_e^{k+1} , \mathcal{A}^*y_e^{k+1}\right \rangle\\[2mm]
   &+\sigma \left ( 2-\rho  \right )\left \langle \mathcal{B}^*\left ( z_e^{k+1}-z_e^k \right ) ,\mathcal{A}^*y_e^{k+1}\right \rangle
   		-\left \langle \S\left ( y_e^{k+1}-y_e^k \right ),y_e^{k+1} \right \rangle\\[2mm]
   &-\left \langle \T\left ( z_e^{k+1}-z_e^k \right ),z_e^{k+1} \right \rangle\\[2mm]
   & \geq \left \| y_e^{k+1} \right \|_{\Sigma _f}^2+\left \| z_e^{k+1} \right \|_{\Sigma _g}^2,
   	\end{aligned}
   \end{equation}
   which can be reformulated as
   \begin{equation}\label{eq:14}
   	\begin{split}
   		&\left \langle x_e^{k+1}+\sigma \left ( 1-\rho  \right )\mathcal{B}^*z_e^{k+1},\mathcal{A}^*y_e^{k+1} +\mathcal{B}^*z_e^{k+1} \right \rangle-\sigma \left ( 1-\rho  \right )\left \|\mathcal{A}^*y_e^{k+1} +\mathcal{B}^*z_e^{k+1} \right \|^2\\[2mm]
   		&+\sigma \left ( 2-\rho  \right )\left \langle \mathcal{B}^*\left ( z_e^{k+1}-z_e^k \right ) ,\mathcal{A}^*y_e^{k+1}\right \rangle-\left \langle \S\left ( y_e^{k+1}-y_e^k \right ),y_e^{k+1} \right \rangle\\[2mm]
   &-\left \langle \T\left ( z_e^{k+1}-z_e^k \right ),z_e^{k+1} \right \rangle\\[2mm]
   		&\geq \left \| y_e^{k+1} \right \|_{\Sigma _f}^2+\left \| z_e^{k+1} \right \|_{\Sigma _g}^2.
   	\end{split}
   \end{equation}
    Next, we analyze the left-hand-side of (\ref{eq:14}). Using the elementary equality (\ref{eq:7}), it is easy to see that
    \begin{equation}\label{eq:15}
    	\left \langle S\left (y_e^{k+1}-y_e^k \right ),y_e^{k+1} \right \rangle=\frac{1}{2}\left ( \left \| y_e^{k+1}- y_e^k\right \| _\S^2+\left \|  y_e^{k+1}\right \|_\S^2-\left \|  y_e^k\right \|_\S^2\right ),
    \end{equation}
    and
    \begin{equation}\label{eq:16}
    	\left \langle \T\left ( z_e^{k+1}-z_e^k \right ),z_e^{k+1} \right \rangle=\frac{1}{2}\left ( \left \| z_e^{k+1}- z_e^k\right \| _\T^2+\left \|  z_e^{k+1}\right \|_\T^2-\left \|  z_e^k\right \|_T^2\right ).
    \end{equation}
    Substituting (\ref{eq:5}), (\ref{eq:6}), (\ref{eq:15}) and (\ref{eq:16}) into (\ref{eq:14}), we have
    \begin{equation*}
    	\begin{split}
    		&\left ( \sigma \rho  \right )^{-1}\left \| x_e^{k}+\sigma \left ( 1-\rho  \right )\mathcal{B}^*z_e^{k} \right \|^2 +\left \|  y_e^k\right \|_\S^2+\left \|  z_e^k\right \|_\T^2+\sigma \left ( 2-\rho  \right )\left \|\mathcal{B}^*z_e^k \right \|^2 +\rho ^{-1}\left ( 2-\rho  \right )\left \| z^k- z^{k-1}  \right \|_\T^2\\[2mm]
    		&\geq \left ( \sigma \rho  \right )^{-1}\left \| x_e^{k+1}+\sigma \left ( 1-\rho  \right )\mathcal{B}^*z_e^{k+1} \right \|^2 +\left \|  y_e^{k+1}\right \|_S^2+\left \|  z_e^{k+1}\right \|_T^2+\sigma \left ( 2-\rho  \right )\left \|\mathcal{B}^*z_e^{k+1}   \right \|^2\\[2mm]
    		&+\rho ^{-1}\left ( 2-\rho  \right )\left \| z^{k+1}- z^k  \right \|_\T^2 +2\left \| y_e^{k+1} \right \|_{\Sigma _f}^2+2\left \| z_e^{k+1} \right \|_{\Sigma _g}^2+(2-\rho )\sigma \left \| \mathcal{A}^*y_e^{k+1}+\mathcal{B}^*z_e^{k+1} \right \|^2\\[2mm]
    		&+\left \| y^{k+1}-y^k \right \|_\S^2+\left \| z^{k+1}-z^k \right \|_\T^2+\sigma \left ( 2-\rho  \right )^2\rho ^{-1}\left \|\mathcal{B}^*\left ( z^{k+1}-z^k   \right )\right \|^2.
    	\end{split}
    \end{equation*}
    Therefore, the assertion (\ref{eq:11}) holds by recalling the definitions in (\ref{eq:10}).
\end{proof}

     The inequality (\ref{eq:11}) is essential to establish the global convergence of sPGADMM. Although the global convergence of sPGADMM has been well studied in the literature especially from the respect of generalized PPA framework, we present a detailed proof for the completeness of global convergence.
\begin{theorem}\label{theorem1}
	Suppose that Assumption \ref{assumption1} holds. Assume that $\Sigma _f+\S+\sigma \mathcal{A} \mathcal{A}^* $ and $\Sigma _g+\T+\sigma \mathcal{B}  \mathcal{B}^* $ are positive definite. Let the sequence $\left\{u^k\right\}$ be generated by sPGADMM scheme \eqref{eq:a}-\eqref{eq:c}. Then the sequence $\left\{(y^k,z^k)\right\}$ converges to an optimal solution of (\ref{eq:1}) and $\left\{x^k\right\}$ converges to an optimal solution of the corresponding dual problem.
\end{theorem}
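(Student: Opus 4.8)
The plan is to convert the one-step decrease \eqref{eq:11} of Lemma \ref{lemma2} into full convergence. Since $\rho\in(0,2)$ and $\S,\T,\Sigma_f,\Sigma_g$ are positive semidefinite, every summand of $\phi_{k+1}$ in \eqref{eq:10} is nonnegative, so $\phi_{k+1}\ge0$, and likewise $t_{k+1}\ge0$ and $(2-\rho)\sigma\|\mathcal{A}^*y_e^{k+1}+\mathcal{B}^*z_e^{k+1}\|^2\ge0$. Hence \eqref{eq:11} makes $\{\phi_k\}$ nonincreasing and bounded below, so convergent, and telescoping yields $\sum_k t_{k+1}<\infty$ and $\sum_k\|\mathcal{A}^*y_e^{k+1}+\mathcal{B}^*z_e^{k+1}\|^2<\infty$. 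Thus $t_{k+1}\to0$ and, since $\mathcal{A}^*\bar y+\mathcal{B}^*\bar z=c$ at the KKT point $\bar u$ furnished by Assumption \ref{assumption1}, the primal residual $\mathcal{A}^*y^{k+1}+\mathcal{B}^*z^{k+1}-c=\mathcal{A}^*y_e^{k+1}+\mathcal{B}^*z_e^{k+1}\to0$. Reading off the summands of $t_{k+1}$ gives $\|y_e^{k+1}\|_{\Sigma_f}\to0$, $\|z_e^{k+1}\|_{\Sigma_g}\to0$, $\|y^{k+1}-y^k\|_{\S}\to0$, $\|z^{k+1}-z^k\|_{\T}\to0$ and $\|\mathcal{B}^*(z^{k+1}-z^k)\|\to0$.

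I would next deduce boundedness of $\{u^k\}$ and extract a cluster point. From $\phi_k\le\phi_0$ the quantities $\|y_e^k\|_{\S}$, $\|z_e^k\|_{\T}$, $\|\mathcal{B}^*z_e^k\|$ and $\|x_e^k+\sigma(1-\rho)\mathcal{B}^*z_e^k\|$ stay bounded; together with the vanishing residual this bounds $\|\mathcal{A}^*y_e^k\|$, and then the identities $\|y_e^k\|_{\Sigma_f+\S+\sigma\mathcal{A}\mathcal{A}^*}^2=\|y_e^k\|_{\Sigma_f}^2+\|y_e^k\|_{\S}^2+\sigma\|\mathcal{A}^*y_e^k\|^2$ and $\|z_e^k\|_{\Sigma_g+\T+\sigma\mathcal{B}\mathcal{B}^*}^2=\|z_e^k\|_{\Sigma_g}^2+\|z_e^k\|_{\T}^2+\sigma\|\mathcal{B}^*z_e^k\|^2$ combined with the positive-definiteness hypothesis force $\{y^k\}$, $\{z^k\}$ and hence $\{x^k\}$ to be bounded. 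Pick a subsequence $u^{k_j}\to u^\infty=(y^\infty,z^\infty,x^\infty)$. To see $u^\infty\in\bar{\Omega}$, I would pass to the limit along $k_j$ in the index-shifted optimality inclusions $\mathcal{B}x^{k}-\T(z^{k}-z^{k-1})\in\partial g(z^{k})$ from \eqref{eq:3} and $\mathcal{A}\big[x^{k}-\sigma(1-\rho)(\mathcal{A}^*y^{k}+\mathcal{B}^*z^{k}-c)+\sigma(2-\rho)\mathcal{B}^*(z^{k}-z^{k-1})\big]-\S(y^{k}-y^{k-1})\in\partial f(y^{k})$ from \eqref{eq:12}: because $\|z^{k}-z^{k-1}\|_{\T}\to0$ yields $\T(z^{k}-z^{k-1})\to0$ (and similarly $\S(y^{k}-y^{k-1})\to0$, $\mathcal{B}^*(z^{k}-z^{k-1})\to0$), the residual vanishes, and $x^{k_j},y^{k_j},z^{k_j}$ converge, the left-hand arguments tend to $\mathcal{B}x^\infty$ and $\mathcal{A}x^\infty$; closedness of the graphs of the maximal monotone operators $\partial g$, $\partial f$ then gives $\mathcal{B}x^\infty\in\partial g(z^\infty)$ and $\mathcal{A}x^\infty\in\partial f(y^\infty)$, while the limiting residual gives $\mathcal{A}^*y^\infty+\mathcal{B}^*z^\infty-c=0$; that is \eqref{eq:2}, so $u^\infty\in\bar{\Omega}$.

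Finally I would upgrade to convergence of the whole sequence by re-running Lemmas \ref{lemma1}--\ref{lemma2} with $\bar u$ replaced by the cluster point $u^\infty$ (valid, because those lemmas use only that $\bar u$ solves \eqref{eq:2}). With this centering $\{\phi_k\}$ is still nonincreasing and convergent, and $\phi_{k_j}\to0$ since its first four summands are continuous in $u^{k_j}$ and vanish at $u^{k_j}=u^\infty$, while the last equals $(2-\rho)\rho^{-1}\|z^{k_j}-z^{k_j-1}\|_{\T}^2\to0$; hence $\phi_k\to0$. This gives $\|y^k-y^\infty\|_{\S}\to0$, $\|z^k-z^\infty\|_{\T}\to0$, $\|\mathcal{B}^*(z^k-z^\infty)\|\to0$ and, via the cross term, $x^k\to x^\infty$; meanwhile the re-centered $t_{k+1}\to0$ together with the vanishing residual gives $\|y^k-y^\infty\|_{\Sigma_f}\to0$, $\|z^k-z^\infty\|_{\Sigma_g}\to0$ and $\|\mathcal{A}^*(y^k-y^\infty)\|\to0$. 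Plugging these into the same two norm-identities, whose left-hand sides are now $\|y^k-y^\infty\|_{\Sigma_f+\S+\sigma\mathcal{A}\mathcal{A}^*}^2\to0$ and $\|z^k-z^\infty\|_{\Sigma_g+\T+\sigma\mathcal{B}\mathcal{B}^*}^2\to0$, and using positive definiteness once more, yields $y^k\to y^\infty$ and $z^k\to z^\infty$. Hence $u^k\to u^\infty\in\bar{\Omega}$, so $\{(y^k,z^k)\}$ converges to a primal optimal solution of \eqref{eq:1} and $\{x^k\}$ to an optimal dual solution, by \cite[Corollaries 28.2.2 and 28.3.1]{rockafellar1970convex}.

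The step I expect to be the real obstacle is handling the mere positive semidefiniteness of $\S$ and $\T$: one never controls $\|y^{k+1}-y^k\|$, $\|z^{k+1}-z^k\|$, $\|y_e^k\|$ or $\|z_e^k\|$ directly, only their $\S$- or $\T$-weighted versions (plus the residual). Every limiting argument must therefore be arranged so that each uncontrolled quantity appears pre-multiplied by $\S$, $\T$, $\mathcal{A}^*$ or $\mathcal{B}^*$ --- which is exactly why the shifted forms \eqref{eq:3} and \eqref{eq:12} are the right tools --- and Euclidean coercivity is recovered only at the end by bundling $\Sigma_f+\S+\sigma\mathcal{A}\mathcal{A}^*$ (resp. $\Sigma_g+\T+\sigma\mathcal{B}\mathcal{B}^*$) and invoking the standing positive-definiteness hypothesis. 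A secondary care point is the index bookkeeping (shifting \eqref{eq:3} and \eqref{eq:12} and passing $\T(z^k-z^{k-1})\to0$ through the operator norm of $\T^{1/2}$), which becomes routine once the weighting principle above is kept in mind.
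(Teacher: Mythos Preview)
Your proposal is correct and follows essentially the same route as the paper's own proof: deduce monotonicity and summability from \eqref{eq:11}, obtain boundedness of $\{u^k\}$ via the positive-definiteness of $\Sigma_f+\S+\sigma\mathcal{A}\mathcal{A}^*$ and $\Sigma_g+\T+\sigma\mathcal{B}\mathcal{B}^*$, pass to the limit in the optimality inclusions \eqref{eq:3} and \eqref{eq:12} using graph closedness, and then re-center at the cluster point to force $\phi_k\to0$ and hence full convergence. Your treatment is in fact a bit more explicit than the paper's on the index bookkeeping and on why $\|v\|_{\T}\to0$ implies $\T v\to0$, but the underlying argument is the same.
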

\begin{proof}
	Notice that $\rho\in(0,2)$, we get from (\ref{eq:11}) that $\{\phi_k\}$ is a nonnegative and monotonically nonincreasing sequence. Consequently, ${\phi_k}$ is  bounded, so do the following sequences:
	\begin{equation}\label{eq:17}
		\left \{  \left \| x_e^{k}+\sigma \left ( 1-\rho  \right )\mathcal{B}^*z_e^{k} \right \|\right \},\; \; \left \{ \left \|z_e^k \right \|_{\sigma\mathcal{B}\mathcal{B}^* } \right \},\; \; \left \{ \left \| z^{k+1}- z^k\right \|_\T \right \},\; \; \left \{  \left \| z_e^k \right \|_\T\right \}\; \textrm{and} \;\left \{ \left \| y_e^k \right \|_\S \right \}.
	\end{equation}
    Besides, from the inequality (\ref{eq:11}), it is clear that
    \begin{equation}\label{eq:18}
    	\begin{aligned}
    		&\lim_{k\rightarrow \infty }\left \| \mathcal{A}^*y_e^{k+1}+\mathcal{B}^*z_e^{k+1} \right \|= 0,\; \; \lim_{k\rightarrow \infty }\left \| z^{k+1}-z^k\right \|_{\sigma\mathcal{B}^*\mathcal{B}}= 0,\; \; \lim_{k\rightarrow \infty }\left \| z^{k+1}-z^k\right \|_\T=0,\\
    		&\lim_{k\rightarrow 0}\left \| z_e^{k+1} \right \|_{\Sigma _g}= 0,\; \; \lim_{k\rightarrow 0}\left \| y^{k+1}-y^k \right \|_\S=0,\; \;\textrm{and}\;  \lim_{k\rightarrow 0}\left \| y_e^{k+1} \right \|_{\Sigma _f}= 0.
    	\end{aligned}
    \end{equation}
    Furthermore, by the use of
    \begin{equation}\label{eq:19}
    	\left \| \mathcal{A}^*y_e^{k+1} \right \|\leq \left \| \mathcal{A}^*y_e^{k+1}+\mathcal{B}^*z_e^{k+1} \right \|+\left \|  \mathcal{B}^*z_e^{k+1}\right \|,
    \end{equation}
 we know that $\left \{ \left \| y_e^{k+1} \right \| _{\sigma \mathcal{A}\mathcal{A}^* }\right \}$ is  bounded, and so do the sequence $\left \{ \left \| y_e^{k+1} \right \| _{\Sigma _f+\S+\sigma \mathcal{A}\mathcal{A}^* }\right \}$.  In light of the positive definiteness of $\Sigma _f+\S+\sigma \mathcal{A} \mathcal{A}^* $, the sequence $\left \{ \left \| y_e^k \right \| \right \}$ is also bounded.
Similarly, the sequences $\left\{\left \|z_e^k \right \|_{\sigma\mathcal{B}\mathcal{B}^* }\right\}$,  $\left\{\left \| z_e^k \right \|_\T\right\}$, and $\left\{\left \| z_e^k \right \|_{\Sigma _g}\right\}$ are all bounded. Then, the sequence $\left \{ \left \| z_e^k \right \| \right \}$ is also bounded because the operator $\Sigma _g+\T+\sigma \mathcal{B}\mathcal{B}^* $ is assumed to be positive definite. The boundedness of $\left \{  \left \| x_e^{k}+\sigma \left ( 1-\rho  \right )\mathcal{B}^*z_e^{k} \right \|\right \}$ and $\left \{ \left \| z_e^k \right \| \right \}$ further indicates that the sequence $\left \{  \left \| x_e^{k}\right \|\right \}$ is bounded. According to the above arguments, it yields that the sequence $\left \{  (y^k,z^k,x^k)\right \}$ is bounded.

From the boundedness of the sequence $\left \{  (y^k,z^k,x^k)\right \}$, we know that there exists at lease one subsequence which converges to a cluster point. Suppose that $\left \{ {(y^{k_i},z^{k_i},x^{k_i})}\right \}$ is a subsequence of ${(y^k,z^k,x^k)}$ converging to $(y^\infty ,z^\infty ,x^\infty )\in \mathcal{Y\times Z\times X}$.    Taking limits on both sides of (\ref{eq:3}) and (\ref{eq:12}) along the subsequence $\left \{ {(y^{k_i},z^{k_i},x^{k_i})}\right \}$, using (\ref{eq:17}) and (\ref{eq:18}), and noticing the closedness of the graphs of $\partial f$ and$\partial g$ \cite[p.80]{borwein2006convex}, we can get that
$$
\mathcal{A}x^\infty\in \partial f(y^\infty),\qquad \mathcal{B}x^\infty\in \partial g(z^\infty),\qquad \mathcal{A}^*y^\infty+\mathcal{B}^*z^\infty-c=0,
$$
which implies that $(y^\infty ,z^\infty ,x^\infty )$ is a solution to the KKT system (\ref{eq:2}). Hence, $(y^\infty ,z^\infty )$ is an optimal solution to problem (\ref{eq:1}) and that $x^\infty$ is an optimal solution to the corresponding dual problem.

We show that $(y^\infty ,z^\infty ,x^\infty )$  is actually the unique limit of $\left \{ {(y^k,z^k,x^k)}\right \}$. One can replace $\left ( \bar{y},\bar{z},\bar{x} \right )$ with $\left ( y^\infty ,z^\infty ,x^\infty  \right )$ in  (\ref{eq:2}) because $\left ( y^\infty ,z^\infty ,x^\infty  \right )$ is also satisfied from the above arguments. It is obvious that for $\rho\in(0,2)$ the subsequence $\left \{ \phi _{k_i} \right \}$ converges to 0 as $k_i\rightarrow \infty$. Because the subsequence comes from nonincreasing sequences, we can get that
    \begin{equation*}
    	\lim_{k\rightarrow 0}\phi _k=0.
    \end{equation*}
   Consequently, it holds that
   \[\lim_{k\rightarrow 0}\left (\left \| y_e^k \right \|_\S+\left \| y_e^{k+1} \right \|_{\Sigma _f}\right )+\left ( \left \|z_e^k \right \|_{\sigma\mathcal{B}\mathcal{B}^* }+\left \| z_e^k \right \|_\T+\left \| z_e^k \right \|_{\Sigma _g}   \right )=0.\]
   Thus, one can get $\lim_{k\rightarrow \infty }z^k=z^\infty $ as the operator $\Sigma _g+\T+\sigma\mathcal{B}\mathcal{B}^* $ is positive definite. This, combines with (\ref{eq:19}) implies that
   \[\lim_{k\rightarrow \infty }\left \| y_e^k  \right \|_{\sigma \mathcal{A}\mathcal{A}^* }+\left \| y_e^k \right \|_\S+\left \| y_e^{k+1} \right \|_{\Sigma _f}=0,\]
   which, from the fact that $\Sigma _f+\S+\sigma \mathcal{A} \mathcal{A}^* $ being positive definite, shows that $\lim_{k\rightarrow \infty }y^k=y^\infty $. Then, employing  (\ref{eq:17}), we acquire that $\lim_{k\rightarrow \infty }x^k=x^\infty$. Therefore, it gets that the whole sequence $\left \{ {(y^k,z^k,x^k)}\right \}$ converges to $(y^\infty ,z^\infty ,x^\infty )$ in the case of $\rho\in (0,2)$.
\end{proof}
\section{Linear convergence of sPGADMM}\label{sec4}
In this section, we are devoted to proving the local linear convergence rate of sPGADMM for convex composite optimization problem (\ref{eq:1}).
For this purpose, we list some notations which will be used in the following analysis. First, for any $\rho \in \left ( 0,2 \right )$, we define
\[l_\rho :=\left\{\begin{matrix}
	\dfrac{1}{\rho }, & \rho \in \left ( 0,\dfrac{1+\sqrt{5}}{2} \right ],\\
	\dfrac{2-\rho }{\rho -1},& \rho \in \left ( \dfrac{1+\sqrt{5}}{2},2\right ),
\end{matrix}\right.\qquad\qquad
h_\rho :=\left\{\begin{matrix}
	1-\min\left \{ \rho ,\rho ^{-1} \right \}, & \rho \in \left ( 0,\dfrac{1+\sqrt{5}}{2} \right ],\\
	2-\rho,\; \; \; \; \; \; \; \; \; \; \; \; \; \; \; \; \;   & \rho \in \left ( \dfrac{1+\sqrt{5}}{2},2\right ),
\end{matrix}\right.\]
which implies that
$$
l_\rho> 0, \ 0< \rho\, l_\rho < 2, \quad  \textrm{and} \quad 0< h_\rho < 2 ( 2-\rho  ) \quad \forall \rho \in \left ( 0,2 \right ).
$$
Secondly, we define
$$
m_\rho :=\rho^{-1}\Big ( 2\min\left \{ \rho ,\rho ^{-1} \right \}-\min\left \{ 1,\rho ^2 \right \} \Big)l_\rho,
$$
$$
n_\rho :=\rho^{-1}\left(2-\rho \right) \Big[ 2\left ( 2-\rho  \right )-h_\rho\Big],
$$
and
$$
o_\rho :=\left(2-\rho \right) \left( 2-\rho l_\rho\right) .
$$
Thirdly, we define
\begin{equation*}
	\mathcal{M_\rho }:=\begin{pmatrix}
		\mathcal{T}+\Sigma _g+\rho ^{-1}\sigma \mathcal{B}\mathcal{B}^* & \rho ^{-1}\left ( 1-\rho  \right )\mathcal{B}\\
		\rho ^{-1}\left ( 1-\rho  \right )\mathcal{B}^* & \left ( \sigma \rho  \right )^{-1}\mathcal{I}
	\end{pmatrix},
\end{equation*}
and
\begin{equation}\label{eq:29}
	\mathcal{M}:=\begin{pmatrix}
		\mathcal{S}+\Sigma _f & 0\\
		0 & \mathcal{M_\rho }
	\end{pmatrix}+\frac{1}{4}o_\rho\sigma \varepsilon \varepsilon ^*,
\end{equation}
and
\begin{equation}\label{eq:30}
	\mathcal{H}:=
	\begin{pmatrix}
		\S+\Sigma _f &  0& 0\\
		0 & \T+\Sigma _g+\frac{1}{2}n_\rho \sigma \mathcal{B}\mathcal{B}^*&0 \\
		0&  0& \left ( 2\sigma \right )^{-1}m_\rho\mathcal{I}
	\end{pmatrix}+\frac{1}{8}o_\rho\sigma \varepsilon \varepsilon ^*,
\end{equation}
where $\I$ is an identity operator within an appropriate space, $\mathcal{\varepsilon }:\mathcal{X}\rightarrow \mathcal{U}:=\mathcal{Y\times Z\times X}$ is a linear operator such that its adjoint $\mathcal{\varepsilon }^*$ satisfies
\begin{equation}\label{epstar}
	\mathcal{\varepsilon }^*\left ( y,z,x \right )=\mathcal{A}^*y+\mathcal{B}^*z,\quad\forall \left ( y,z,x \right )\in \mathcal{Y\times Z\times X}.
\end{equation}
Finally, we define
$$
k_1:=3\left \| S \right \|,\; \; \; k_2:=\max\left \{ \left \| \T \right \|,\rho ^{-2}\left ( 3\lambda _{\max}\left ( \mathcal{A}\mathcal{A}^* \right )\sigma +2\left ( 1- \rho \right )^2\sigma ^{-1} \right )\right \},
$$
and
$$
k_3:=\frac{3}{2}\left ( 1-\rho  \right )^2\rho ^{-2}\sigma\lambda _{\max}\left ( \mathcal{A}\mathcal{A}^* \right )+ \sigma^{-1} \rho^{-2}.
$$
Then, we let $k_4:=\max\left \{ k_1,k_2,k_3 \right \}$.
Let  $\mathcal{H}_0$ be a block-diagonal linear operator defined by\[\mathcal{H}_0:=k_4\begin{pmatrix}
	\S&0& 0\\
	0 & \T+\sigma \mathcal{B}\mathcal{B}^*  & 0\\
	0&  0& \left( 2\sigma \right) ^{-1}\mathcal{I}
\end{pmatrix}.\]
Because $\rho \in \left ( 0,2 \right )$, and $\Sigma _f+\S+\sigma \mathcal{A}\mathcal{A}^*  $ and $ \Sigma _g+\T+\sigma \mathcal{B}\mathcal{B}^* $ are assumed to be positive definite, it is easy to see that   $\mathcal{M_\rho }$ is positive definite, $\mathcal{M}$,  $\mathcal{H}_0$ and $\mathcal{H}$ are positive semidefinite.

We now give a lemma to estimate the upper bound of the sequence $\{R(u^k)\}$.
\begin{lemma}\label{lemma3}
	Let $\left \{ u^k \right \}$ be the sequence generated by sPGADMM scheme (\eqref{eq:a}-\eqref{eq:c}). Then, for any $k\geq0$, we have
	\begin{equation}\label{eq:23}
		\left \| u^{k+1}-u^k \right \|_{\mathcal{H}_0}^2\geq \left \| R(u^{k+1}) \right \|^2.
	\end{equation}
\end{lemma}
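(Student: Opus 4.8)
The plan is to bound each of the three block components of the KKT residual $R(u^{k+1})$ from \eqref{eq:22} separately, and then to collect the resulting constants into the weighted norm $\|\cdot\|_{\mathcal{H}_0}$. The workhorse is the elementary equivalence $v=\mathrm{Prox}_h(w)\iff w-v\in\partial h(v)$ for a closed proper convex $h$, together with the fact that $\mathrm{Prox}_h$ is nonexpansive ($1$-Lipschitz).

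First I would dispose of the $z$-component. The optimality relation \eqref{eq:3}, namely $\mathcal{B}x^{k+1}-\mathcal{T}(z^{k+1}-z^k)\in\partial g(z^{k+1})$, means exactly $z^{k+1}=\mathrm{Prox}_g\big(z^{k+1}+\mathcal{B}x^{k+1}-\mathcal{T}(z^{k+1}-z^k)\big)$; subtracting $\mathrm{Prox}_g(z^{k+1}+\mathcal{B}x^{k+1})$ and invoking nonexpansiveness gives $\|z^{k+1}-\mathrm{Prox}_g(z^{k+1}+\mathcal{B}x^{k+1})\|^2\le\|\mathcal{T}(z^{k+1}-z^k)\|^2\le\|\mathcal{T}\|\,\|z^{k+1}-z^k\|_{\mathcal{T}}^2$, the last step using $\mathcal{T}^2\preceq\|\mathcal{T}\|\mathcal{T}$. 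For the $y$-component I would read off from \eqref{eq:12} an explicit subgradient $\xi\in\partial f(y^{k+1})$, so that $y^{k+1}=\mathrm{Prox}_f(y^{k+1}+\xi)$ and hence $\|y^{k+1}-\mathrm{Prox}_f(y^{k+1}+\mathcal{A}x^{k+1})\|\le\|\xi-\mathcal{A}x^{k+1}\|$. The decisive algebraic step is to use the multiplier update \eqref{eq:c} in the rearranged form $\mathcal{A}^*y^{k+1}+\mathcal{B}^*z^{k+1}-c=-(\sigma\rho)^{-1}(x^{k+1}-x^k)+\rho^{-1}(\rho-1)\mathcal{B}^*(z^{k+1}-z^k)$ to eliminate the primal residual from $\xi-\mathcal{A}x^{k+1}$; after simplification the coefficient in front of $\mathcal{A}\mathcal{B}^*(z^{k+1}-z^k)$ collapses because $(1-\rho)^2+\rho(2-\rho)=1$, leaving $\xi-\mathcal{A}x^{k+1}$ as a combination of only $\mathcal{S}(y^{k+1}-y^k)$, $\mathcal{A}(x^{k+1}-x^k)$ and $\mathcal{A}\mathcal{B}^*(z^{k+1}-z^k)$. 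The $x$-component of $R(u^{k+1})$ is precisely the rearranged identity just used, so it too is already expressed through the increments $y^{k+1}-y^k$, $z^{k+1}-z^k$, $x^{k+1}-x^k$.

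It then remains to square, apply $\|a+b+c\|^2\le 3(\|a\|^2+\|b\|^2+\|c\|^2)$ to the $y$-component and $\|a+b\|^2\le 2(\|a\|^2+\|b\|^2)$ to the $x$-component, and bound the auxiliary operators via $\|\mathcal{S}(\cdot)\|^2\le\|\mathcal{S}\|\,\|\cdot\|_{\mathcal{S}}^2$, $\|\mathcal{A}(\cdot)\|^2\le\lambda_{\max}(\mathcal{A}\mathcal{A}^*)\|\cdot\|^2$ and $\|\mathcal{A}\mathcal{B}^*(\cdot)\|^2\le\lambda_{\max}(\mathcal{A}\mathcal{A}^*)\|\cdot\|_{\mathcal{B}\mathcal{B}^*}^2$. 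Adding the three squared bounds yields an estimate of the shape $c_y\|y^{k+1}-y^k\|_{\mathcal{S}}^2+c_z\|z^{k+1}-z^k\|_{\mathcal{T}}^2+c_z'\|z^{k+1}-z^k\|_{\mathcal{B}\mathcal{B}^*}^2+c_x\|x^{k+1}-x^k\|^2$, and the constants $k_1,k_2,k_3$ are tailored precisely so that $k_1$ dominates $c_y$, $k_2$ dominates $c_z$ and $\sigma^{-1}c_z'$, and $k_3$ dominates $c_x$ after the $(2\sigma)^{-1}$-weighting of the $x$-block; taking $k_4=\max\{k_1,k_2,k_3\}$ then makes this sum at most $\|u^{k+1}-u^k\|_{\mathcal{H}_0}^2$, which is \eqref{eq:23}. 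I expect the only real difficulty to be bookkeeping: performing the substitution of \eqref{eq:c} into $\xi-\mathcal{A}x^{k+1}$ correctly, verifying the cancellation that leaves exactly three clean terms, and then checking that after the operator-norm estimates the aggregated coefficients are indeed those absorbed by $k_1,k_2,k_3$ together with the block weighting built into $\mathcal{H}_0$.
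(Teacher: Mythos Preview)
Your proposal is correct and follows essentially the same route as the paper: derive the proximal identities from \eqref{eq:3} and \eqref{eq:12}, use nonexpansiveness of $\mathrm{Prox}_f$ and $\mathrm{Prox}_g$, substitute the rearranged update \eqref{eq:c} to express the primal residual through $x^{k+1}-x^k$ and $\mathcal{B}^*(z^{k+1}-z^k)$, observe the cancellation $(1-\rho)^2+\rho(2-\rho)=1$, and then square, split with the $3$- and $2$-term inequalities, and absorb the resulting coefficients into $k_1,k_2,k_3$. This is exactly the paper's argument, including the same operator-norm estimates $\|\mathcal{S}(\cdot)\|^2\le\|\mathcal{S}\|\,\|\cdot\|_{\mathcal{S}}^2$ and $\|\mathcal{A}(\cdot)\|^2\le\lambda_{\max}(\mathcal{A}\mathcal{A}^*)\|\cdot\|^2$.
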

\begin{proof}
	It follows from $(\ref{eq:3})$ and (\ref{eq:12}), we obtain
	\begin{equation}\label{eq:21}
		\begin{cases}
			y^{k+1}=\textrm{Prox}_f\Big ( y^{k+1}+\mathcal{A} [ x^{k+1} -\left ( 1-\rho  \right )\sigma \left ( \mathcal{A}^*y_e^{k+1}+\mathcal{B}^*z_e^{k+1} \right ) \Big. \\[3mm]
			\qquad\qquad \qquad \left. \Big.+\sigma \left ( 2-\rho  \right )\mathcal{B}^*\left ( z^{k+1}-z^k \right )\right ] -\S\left ( y^{k+1}-y^k \right )\Big ),\\[3mm]
			z^{k+1}=\textrm{Prox}_g\Big( z^{k+1}+\mathcal{B}x^{k+1} -\T\left ( z^{k+1}-z^k \right )\Big).
		\end{cases}
	\end{equation}
Then, combining (\ref{eq:21}) and (\ref{eq:c}), and noticing the Lipschitz continuity property of Moreau-Yosida proximal mapping, we can get from the definition of $R(\cdot)$ (\ref{eq:22}) that
\begin{equation*}
	\begin{split}
		\left \| R\left ( u^{k+1} \right ) \right \|^2
		\leq &\Big \|\S\left ( y^{k+1}-y^k \right ) + \left ( 1-\rho  \right )\rho ^{-1}\mathcal{A} \left ( x^k-x^{k+1} \right )+\rho ^{-1}\sigma \mathcal{A}\mathcal{B}^*\left ( z^k-z^{k+1} \right )\Big \|^2\\[3mm]
		&+\Big\| \T\left ( z^{k+1}-z^k \right ) \Big \|^2+\Big \|\left ( \sigma \rho  \right )^{-1}\left (  x^k-x^{k+1} \right )+\rho ^{-1}\left ( 1-\rho  \right )\mathcal{B}^*\left ( z^k-z^{k+1} \right )\Big\|^2\\[3mm]
		\leq & 3\left \| \S \right \|\left \| y^{k+1}-y^k \right \|_\S^2+ 3\left ( 1-\rho  \right )^2\rho ^{-2}\lambda _{\max}\left ( \mathcal{A}\mathcal{A}^* \right )\left \| x^k-x^{k+1} \right \|^2\\[3mm]
&+3\rho ^{-2}\sigma ^2\lambda _{\max}\left ( \mathcal{A}\mathcal{A}^* \right )\left \|\mathcal{B}^*\left (  z^{k+1}-z^k \right ) \right \|^2
		+\left \| \T \right \|\left \| z^{k+1}-z^k \right \|_\T^2\\[3mm]
&+2\left ( \sigma \rho  \right )^{-2}\left \| x^k-x^{k+1} \right \|^2+2\rho ^{-2}\left ( 1-\rho  \right )^2\left \| \mathcal{B}^*\left ( z^k-z^{k+1} \right ) \right \|^2\\[3mm]
		\leq& k_1\left \| y^{k+1}-y^k \right \|_\S^2+k_2\left \| z^{k+1}-z^k \right \|_{\sigma \mathcal{B}\mathcal{B}^*+\T}^2+k_3\left \| x^k-x ^{k+1} \right \|^2.
	\end{split}
\end{equation*}
Recalling the definition of $\mathcal{H}_0$, one can readily see that the inequality (\ref{eq:23}) holds. This completes the proof.
\end{proof}

In the following analysis, the $\mathcal{M}$ defined in (\ref{eq:29}) is used to measure the distance from an iterate to the KKT point, while the $\mathcal{H}$ given in (\ref{eq:30}) is used to serve the distance between two consecutive iterates.
The positive definiteness of the linear operators $\M$ and $\H$ is proved at the following lemma.
\begin{lemma}\label{prop3}
	Let $\rho \in \left ( 0,2 \right )$. Then, it holds that
	$$
 \Big(\Sigma _f+\S+\sigma \mathcal{A}\mathcal{A}^*\succ 0 \  \textrm{and} \  \Sigma _g+\T+\sigma \mathcal{B}\mathcal{B}^*\succ 0  \Big)\Longleftrightarrow \mathcal{M}\succ 0\Longleftrightarrow \mathcal{H}\succ 0,
	$$
	where the symbol `$\succ 0$' means that the associated operator is positive definite.
\end{lemma}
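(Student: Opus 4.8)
The plan is to collapse all three conditions, via Schur complements, to the positive definiteness of one family of quadratic forms of a common shape, and then settle that by an elementary kernel argument. First I would record the scalar consequences of the stated properties of $l_\rho$ and $h_\rho$ under $\rho\in(0,2)$: namely $2-\rho>0$, $o_\rho=(2-\rho)(2-\rho l_\rho)>0$ (since $0<\rho l_\rho<2$), $n_\rho>0$ (since $0<h_\rho<2(2-\rho)$), and $m_\rho>0$ (a two-case check: the bracket $2\min\{\rho,\rho^{-1}\}-\min\{1,\rho^2\}$ equals $\rho(2-\rho)$ when $\rho\le 1$ and $(2-\rho)/\rho$ when $\rho>1$, both positive). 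I would also read off from \eqref{epstar} that $\varepsilon w=(\A w,\B w,0)$ for $w\in\X$, so $\varepsilon\varepsilon^*$ is positive semidefinite, its associated quadratic form at $(y,z,x)$ is $\|\A^*y+\B^*z\|^2$, and it is inert in the $x$-component.

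Next I would treat $\M$ and $\H$ separately. For $\M$, viewed as a self-adjoint block operator over $\Y\times\Z\times\X$ via \eqref{eq:29}, the $(\X,\X)$ pivot is $(\sigma\rho)^{-1}\I\succ 0$, so by the Schur complement criterion $\M\succ 0$ if and only if its Schur complement with respect to that pivot is positive definite. The only $x$-coupling is the $(\Z,\X)$ entry $\rho^{-1}(1-\rho)\B$, so the Schur correction subtracts $\sigma\rho^{-1}(1-\rho)^2\B\B^*$ from the $(\Z,\Z)$ entry; using $\rho^{-1}-\rho^{-1}(1-\rho)^2=2-\rho$, the $(\Z,\Z)$ entry becomes $\T+\Sigma_g+\sigma(2-\rho)\B\B^*+\tfrac14 o_\rho\sigma\B\B^*$, and after absorbing the off-diagonal $\tfrac14 o_\rho\sigma\A\B^*$ terms the Schur complement is precisely the operator on $\Y\times\Z$ whose quadratic form is
\[ Q_{a,b}(y,z):=\|y\|_{\S+\Sigma_f}^2+\|z\|_{\T+\Sigma_g}^2+a\sigma\|\B^*z\|^2+b\sigma\|\A^*y+\B^*z\|^2, \]
with $(a,b)=(2-\rho,\tfrac14 o_\rho)$. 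For $\H$, the inertness of $\varepsilon\varepsilon^*$ in $x$ makes \eqref{eq:30} block-diagonal in the splitting $(\Y\times\Z)\times\X$; the $\X$-block is $(2\sigma)^{-1}m_\rho\I\succ 0$, so $\H\succ 0$ iff its $(\Y\times\Z)$-block is positive definite, and that block's quadratic form is exactly $Q_{a,b}$ with $(a,b)=(\tfrac12 n_\rho,\tfrac18 o_\rho)$.

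Finally I would prove the core claim: for any constants $a,b>0$, $Q_{a,b}\succ 0$ if and only if $\Sigma_f+\S+\sigma\A\A^*\succ 0$ and $\Sigma_g+\T+\sigma\B\B^*\succ 0$. The ``if'' direction: if $Q_{a,b}(y,z)=0$, then each of the four nonnegative terms vanishes; from $\B^*z=0$ and $\A^*y+\B^*z=0$ one gets $\A^*y=0$, whence $\|y\|_{\S+\Sigma_f+\sigma\A\A^*}^2=0$ and $\|z\|_{\T+\Sigma_g+\sigma\B\B^*}^2=0$, so $y=z=0$. The ``only if'' direction: setting $z=0$ gives $\S+\Sigma_f+b\sigma\A\A^*\succ 0$, and since for positive semidefinite $P,C$ one has $\ker(P+cC)=\ker P\cap\ker C$ for every $c>0$ — so that positive definiteness of $P+cC$ does not depend on the choice of $c>0$ — this yields $\S+\Sigma_f+\sigma\A\A^*\succ 0$; setting $y=0$ gives $\T+\Sigma_g+(a+b)\sigma\B\B^*\succ 0$ and hence $\T+\Sigma_g+\sigma\B\B^*\succ 0$ in the same way. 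Applying this with the two pairs $(a,b)$ identified above gives the chain of equivalences in the statement. I expect the Schur-complement bookkeeping for $\M$ — checking that the off-diagonal $\B$-terms and the $\rho^{-1}\sigma\B\B^*$ summand combine through $\rho^{-1}-\rho^{-1}(1-\rho)^2=2-\rho$ into exactly $Q_{2-\rho,\,o_\rho/4}$ — together with the sign verifications for $m_\rho,n_\rho,o_\rho$, to be the only real obstacle; the conceptual heart is the one-line kernel argument above.
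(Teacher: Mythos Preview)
Your proposal is correct and essentially complete. It differs from the paper's argument in its organization rather than in substance.

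The paper does not perform Schur complements. For $\M$ it argues directly by contradiction: if $\langle d,\M d\rangle=0$ for some $d=(d_y,d_z,d_x)\neq 0$, then by the block decomposition $\M=\mathrm{diag}(\S+\Sigma_f,\,\M_\rho)+\tfrac14 o_\rho\sigma\varepsilon\varepsilon^*$ each of the three nonnegative summands vanishes; invoking the previously asserted fact that $\M_\rho\succ 0$ (itself a consequence of $\Sigma_g+\T+\sigma\B\B^*\succ 0$) forces $d_z=d_x=0$, whence $\A^*d_y=0$ and $d_y\in\ker(\S+\Sigma_f)$, contradicting $\Sigma_f+\S+\sigma\A\A^*\succ 0$. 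The converse is handled by restricting to $(d_y,0,0)$ and $(0,d_z,0)$, exactly as in your ``only if'' direction. The equivalence for $\H$ is dismissed as analogous.

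Your route---eliminating the $\X$-block via a Schur complement and reducing both $\M$ and $\H$ to a single family $Q_{a,b}$ with $a,b>0$---is more uniform: it treats $\M$ and $\H$ symmetrically, avoids appealing to the separately stated positive definiteness of $\M_\rho$, and makes transparent why the particular values of the scalar coefficients are irrelevant (only their positivity matters). The paper's argument is shorter but leans on the structural observation $\M_\rho\succ 0$ and leaves the $\H$ case to the reader. Your Schur-complement arithmetic for $\M$ (in particular the identity $\rho^{-1}-\rho^{-1}(1-\rho)^2=2-\rho$) and the sign checks for $m_\rho,n_\rho,o_\rho$ are all correct.
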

\begin{proof}
	We only prove the first equivalence because the second one can be got in a similar way.
	Suppose on the contrary that there exists a non-zero vector $d:=\left ( d_y,d_z,d_x \right )\in \mathcal{Y\times Z\times X}$ such that $\left \langle d,\mathcal{M}d \right \rangle=0$ in the cases of $\Sigma _f+\S+\sigma \mathcal{A}\mathcal{A}^*\succ 0 $ and $\Sigma _g+\T+\sigma \mathcal{B}\mathcal{B}^*\succ 0$.
    Using the definition of $\mathcal{M}$ in (\ref{eq:29}), it means that
    $$
    \langle d_y, ( S+\Sigma _f ) d_y \rangle=0,\qquad \langle  ( d_z,d_x ),\mathcal{M}_\rho  ( d_z,d_x )  \rangle=0,\quad\textrm{and}\quad\mathcal{A}^*d_y+\mathcal{B}^*d_z=0,
    $$
    which, combines with the fact that $\mathcal{M}_\rho\succ 0$ and that $\Sigma _f+S+\sigma \mathcal{A}\mathcal{A}^*\succ 0 $,  it yields that $d=0$. This result contradicts to the assumption $d\neq 0$, and hence, $\mathcal{M}\succ 0$.
    Conversely, suppose that $\mathcal{M}\succ 0$. Because $o_\rho >0$ is a positive scalar, for any $d:=\left ( d_y,0,0 \right )\in \mathcal{Y\times Z\times X}$, it holds that $\left \langle d_y,\left ( \S+\Sigma _f +\frac{1}{4}o_\rho \sigma \mathcal{A}\mathcal{A}^*\right ) d_y\right \rangle>0$, which means $\Sigma _f+S+\sigma \mathcal{A}\mathcal{A}^*\succ 0$. In a similar way, for any $d:=\left ( 0,d_z,0 \right )\in \mathcal{Y\times Z\times X}$, we get $\left \langle d_z,\left [ \T+\Sigma _g +\left( \rho ^{-1}+\frac{1}{4}o_\rho\right ) \sigma \mathcal{B}\mathcal{B}^*\right ] d_z\right \rangle> 0$, which means that $\Sigma _g+\T+\sigma \mathcal{B}\mathcal{B}^*\succ 0 $. In a summary, the first equivalence is proved.
\end{proof}

Based on Lemmas \ref{lemma3}  and \ref{prop3}, we are ready to present a key inequality required for the coming linear convergence rate analysis.
\begin{lemma}
	Let $\rho \in (0,2)$ and $\left \{  u^k \right \}$ be a sequence generated by sPGADMM scheme (\eqref{eq:a}-\eqref{eq:c}). Then, for any $\bar{u}=\left ( \bar{y},\bar{z},\bar{x} \right )\in \bar{\Omega }$ and any $k\geq 0$, we have
	\begin{equation}\label{eq:27}
		\begin{split}
			\left \| u^{k} -\bar{u}\right \|_\mathcal{M}^2&+\left ( 2-\rho  \right )\rho ^{-1}\left \| z^k-z^{k-1} \right \|_\T^2\\[2mm]
			\geq &\left(\left \| u^{k+1} -\bar{u}\right \|_\mathcal{M}^2+\left ( 2-\rho  \right )\rho ^{-1}\left \| z^{k+1}-z^k \right \|_\T^2\right)+\left \| u^{k+1}-u^k \right \|_\mathcal{H}^2.
		\end{split}
	\end{equation}
Consequently, it also holds for $k\geq 0$ that
\begin{equation}\label{eq:28}
	\begin{split}
		\textrm{dist}_\mathcal{M}^2\left ( u^{k},\bar{\Omega } \right )&+\left ( 2-\rho  \right )\rho ^{-1}\left \| z^k-z^{k-1} \right \|_\T^2\\[2mm]
		\geq &\left (\textrm{dist}_\mathcal{M}^2\left ( u^{k+1},\bar{\Omega } \right )+\left ( 2-\rho  \right )\rho ^{-1}\left \| z^{k+1}-z^k \right \|_\T^2  \right )+\left \| u^{k+1}-u^k \right \|_\mathcal{H}^2.
	\end{split}
\end{equation}
\end{lemma}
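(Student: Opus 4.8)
The plan is to refine the bookkeeping behind Lemma~\ref{lemma2}: in reaching \eqref{eq:11} several nonnegative quantities were discarded via $\rho\in(0,2)$, and here I would retain all of them so that, after regrouping, the surviving quadratic forms become exactly the ones defining $\M$ and $\H$. First I would reproduce \eqref{eq:14} as in Lemma~\ref{lemma2} (pairing $u^{k+1}$ with $\bar u$ in \eqref{eq:y}--\eqref{eq:z} via the inclusions \eqref{eq:3} and \eqref{eq:12}), and in addition pair $u^{k+1}$ with $u^{k}$ by applying \eqref{eq:y}--\eqref{eq:z} to the inclusions \eqref{eq:3}, \eqref{eq:12} written one iteration apart; the latter inequality is what produces the terms $\|y^{k+1}-y^k\|_{\Sigma_f}^2$ and $\|z^{k+1}-z^k\|_{\Sigma_g}^2$ sitting inside $\|u^{k+1}-u^k\|_\H^2$ (indeed \eqref{eq:4} is already the weak, $\Sigma_g=0$, version of this for the $z$-block). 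Into the combined inequality I would then substitute \eqref{eq:5} for the $x_e^{k+1}$-inner product, \eqref{eq:15}--\eqref{eq:16} for the $\S$- and $\T$-inner products, and \eqref{eq:8} (equivalently \eqref{eq:c}) to express $x^{k+1}-x^k$ through $\A^*y_e^{k+1}+\B^*z_e^{k+1}$ and $\B^*(z^{k+1}-z^k)$; after this the only non-square term remaining is the mixed product $\sigma(2-\rho)\langle\B^*(z^{k+1}-z^k),\A^*y_e^{k+1}\rangle$.

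Controlling this mixed term is the crux, and it is exactly where, as the introduction anticipates, the relaxation factor brings the technical difficulties. Writing $\A^*y_e^{k+1}=(\A^*y_e^{k+1}+\B^*z_e^{k+1})-\B^*z_e^{k+1}$, the part against $\B^*z_e^{k+1}$ is treated exactly by \eqref{eq:7} --- producing the telescoping pair $\|\B^*z_e^{k}\|^2-\|\B^*z_e^{k+1}\|^2$ together with a $-\|\B^*(z^{k+1}-z^k)\|^2$ --- while the part against $\A^*y_e^{k+1}+\B^*z_e^{k+1}$ is estimated by a Young inequality together with \eqref{eq:4}, the free parameter chosen to be $l_\rho$, whose two branches correspond to the cases $\rho\le(1+\sqrt5)/2$ and $\rho>(1+\sqrt5)/2$. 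The scalars $h_\rho,m_\rho,n_\rho,o_\rho$ are then forced by the requirement that the leftovers close up: once the $-\tfrac{\sigma(2-\rho)}{2}\|\A^*y_e^{k+1}+\B^*z_e^{k+1}\|^2$ coming from \eqref{eq:5} has been absorbed, the residual coefficient of $\|\A^*y_e^{k+1}+\B^*z_e^{k+1}\|^2$ gives the $\tfrac14 o_\rho\sigma\varepsilon\varepsilon^*$ part of $\M$ and the $\tfrac18 o_\rho\sigma\varepsilon\varepsilon^*$ part of $\H$ with $o_\rho=(2-\rho)(2-\rho l_\rho)$; the surviving coefficient of $\|\B^*(z^{k+1}-z^k)\|^2$ is nonnegative and equals $\tfrac12 n_\rho\sigma$; and the $\|x^{k+1}-x^k\|^2$ generated from \eqref{eq:8} carries coefficient $(2\sigma)^{-1}m_\rho$. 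The elementary facts $l_\rho>0$, $0<\rho l_\rho<2$, $0<h_\rho<2(2-\rho)$ recorded after the definitions are precisely what makes $m_\rho,n_\rho,o_\rho>0$.

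It then remains to assemble. Multiplying by $2$ and using the identity
\[
\begin{aligned}
\langle u-\bar u,\M(u-\bar u)\rangle={}&\|y_e\|_{\S+\Sigma_f}^2+\|z_e\|_{\T+\Sigma_g}^2+\sigma(2-\rho)\|\B^*z_e\|^2\\
&{}+(\sigma\rho)^{-1}\|x_e+\sigma(1-\rho)\B^*z_e\|^2+\tfrac14 o_\rho\sigma\|\A^*y_e+\B^*z_e\|^2
\end{aligned}
\]
(obtained from \eqref{eq:29} and \eqref{epstar} by completing the square in the $x$-block of $\M$), and the analogous identity for $\H$ from \eqref{eq:30}, I would recognize the level-$k$ quantities as $\|u^k-\bar u\|_\M^2+(2-\rho)\rho^{-1}\|z^k-z^{k-1}\|_\T^2$, the level-$(k+1)$ quantities as $\|u^{k+1}-\bar u\|_\M^2+(2-\rho)\rho^{-1}\|z^{k+1}-z^k\|_\T^2$, and the remaining difference/residual quantities as $\|u^{k+1}-u^k\|_\H^2$ plus a manifestly nonnegative leftover; dropping the leftover yields \eqref{eq:27}. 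For \eqref{eq:28}, since \eqref{eq:27} holds for \emph{every} $\bar u\in\bar\Omega$ and $\M\succ0$ by Lemma~\ref{prop3}, I would take $\bar u$ to be the $\M$-projection of $u^k$ onto the nonempty closed convex set $\bar\Omega$, so that $\|u^k-\bar u\|_\M^2=\dist_\M^2(u^k,\bar\Omega)$ while $\|u^{k+1}-\bar u\|_\M^2\ge\dist_\M^2(u^{k+1},\bar\Omega)$; since $\|z^k-z^{k-1}\|_\T^2$ does not involve $\bar u$, \eqref{eq:28} follows at once.

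The genuinely hard part is the crux step: the Young split of the relaxation cross term must come out with exactly the coefficients $l_\rho,h_\rho,m_\rho,n_\rho,o_\rho$ so that the leftover quadratic forms coincide with $\M$ and $\H$ themselves rather than merely dominate them --- this is what forces the two-branch definition of $l_\rho$ at $\rho=(1+\sqrt5)/2$ and the exact shape of $m_\rho,n_\rho,o_\rho$. A secondary nuisance is verifying, in the very first step, that pairing subgradients one iteration apart leaves behind only the $\|z^k-z^{k-1}\|_\T^2$ term (which the extra summand on both sides of \eqref{eq:27} is there to absorb) and nothing in $z^{k-1}$ or $y^{k-1}$ left uncontrolled.
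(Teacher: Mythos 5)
Your overall architecture matches the paper's: both arguments push the inequality \eqref{eq:11} of Lemma~\ref{lemma2} through a Young-type estimate governed by $l_\rho$ (in the paper this is applied to the cross term $2(1-\rho)\langle x^k-x^{k+1},\sigma \B^*(z^{k+1}-z^k)\rangle$ inside the expansion of $\|x^{k+1}-x^k\|^2$ obtained from \eqref{eq:c}, which is exactly where the two branches of $l_\rho$ and the constants $m_\rho,n_\rho,o_\rho$ originate), then regroup into the quadratic forms $\M$ and $\H$, and finally pass to distances using the arbitrariness of $\bar{u}\in\bar{\Omega}$; your closing step via the $\M$-projection of $u^k$ is the paper's argument verbatim.

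There is, however, one concrete step where your plan commits to a mechanism that would not close. You propose to generate the terms $\|y^{k+1}-y^k\|_{\Sigma_f}^2$ and $\|z^{k+1}-z^k\|_{\Sigma_g}^2$ inside $\|u^{k+1}-u^k\|_{\H}^2$ by pairing the inclusions \eqref{eq:12} and \eqref{eq:3} at consecutive iterations. For the $y$-block this produces, besides the desired $\Sigma_f$-term, cross terms coupling $y^{k+1}-y^k$ with $x^{k+1}-x^k$ and with $\B^*(z^{k+1}-z^k)-\B^*(z^k-z^{k-1})$, together with $-\langle \S(y^{k+1}-2y^k+y^{k-1}),\,y^{k+1}-y^k\rangle$, which telescopes only up to a residual $\|y^k-y^{k-1}\|_{\S}^2$; the Lyapunov function in \eqref{eq:27} has a slot for $\|z^k-z^{k-1}\|_{\T}^2$ but none for a $\|y^k-y^{k-1}\|_{\S}^2$ term or for the new $\A$--$x$ couplings, so this route does not reproduce the stated inequality. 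The paper avoids the issue entirely: $t_{k+1}$ in \eqref{eq:10} already carries $2\|y_e^{k+1}\|_{\Sigma_f}^2$ and $2\|z_e^{k+1}\|_{\Sigma_g}^2$; one copy of each is routed into the level-$(k+1)$ $\M$-norm, the matching level-$k$ norms $\|y_e^{k}\|_{\Sigma_f}^2$, $\|z_e^{k}\|_{\Sigma_g}^2$ are added to both sides, and $\|a\|^2+\|b\|^2\geq \tfrac{1}{2}\|a-b\|^2$ converts the leftover pair into the difference norms (the same device, applied to $(2-\rho)\sigma\|\A^*y_e^{k+1}+\B^*z_e^{k+1}\|^2$ after the $l_\rho$-part has been spent, is what yields the $\tfrac14 o_\rho\sigma\varepsilon\varepsilon^*$ block of $\M$ and the $\tfrac18 o_\rho\sigma\varepsilon\varepsilon^*$ block of $\H$). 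Beyond this, your statement that the scalars ``are forced by the requirement that the leftovers close up'' defers the entire coefficient verification, which is the actual content of the lemma; as written the proposal is a plausible plan rather than a proof.
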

\begin{proof}
	From (\ref{eq:c}), it holds that
	\begin{equation*}
		\begin{split}
			&\left \| x^{k+1}-x^k\right \|^2
			=\left ( \sigma \rho  \right )^2\left \| \mathcal{A}^*y_e^{k+1}+\mathcal{B}^*z_e^{k+1} \right \|^2-\sigma ^2\left ( 1-\rho  \right )^2\left \| \mathcal{B}^*\left ( z^{k+1}-z^k \right ) \right \|^2\\[2mm]
			&+2\left ( 1-\rho  \right )\left \langle x^k-x^{k+1},\sigma \mathcal{B}^* \left ( z^{k+1}-z^k \right )\right \rangle\\[2mm]
			&\leq \left ( \sigma \rho  \right )^2\left \| \mathcal{A}^*y_e^{k+1}+\mathcal{B}^*z_e^{k+1} \right \|^2-\sigma ^2\left ( 1-\rho  \right )^2\left \| \mathcal{B}^*\left ( z^{k+1}-z^k \right ) \right \|^2\\[2mm]
			&+\left ( 1-\min\left \{ \rho ,\rho ^{-1} \right \} \right )\left \| x^{k+1}-x^k\right \|^2+\rho \left ( \max\left \{ \rho ,\rho ^{-1} \right \} -1\right )\sigma ^2\left \| \mathcal{B}^*\left ( z^{k+1}-z^k \right ) \right \|^2.
		\end{split}
	\end{equation*}
Let $\bar{u}=\left ( \bar{y},\bar{z},\bar{x} \right )\in \bar{\Omega }$. Using Lemma \ref{lemma2} and above inequality, it yields that
	\begin{equation*}
		\begin{split}
			\left ( \sigma \rho  \right )^{-1} &\left \| x_e^{k}+\sigma\left( 1-\rho\right) \mathcal{B}^*z_e^k\right \|^2 +\left \|  y_e^k\right \|_S^2+\left \|  z_e^k\right \|_T^2+\sigma \left ( 2-\rho  \right )\left \|\mathcal{B}^*z_e^k \right \|^2\\[2mm]
 &+\rho ^{-1}\left ( 2-\rho  \right )\left \| z^k- z^{k-1}  \right \|_\T^2\\[2mm]
			\geq &\left ( \sigma \rho  \right )^{-1}\left \| x_e^{k+1}+\sigma\left( 1-\rho\right) \mathcal{B}^*z_e^{k+1}\right \|^2 +\left \|  y_e^{k+1}\right \|_\S^2+\left \|  z_e^{k+1}\right \|_\T^2\\[2mm]
&+\sigma \left ( 2-\rho  \right )\left \|\mathcal{B}^*z_e^{k+1}   \right \|^2+\rho ^{-1}\left ( 2-\rho  \right )\left \| z^{k+1}- z^k  \right \|_\T^2\\[2mm]
&+\left ( 2\sigma \rho  \right )^{-1}\left ( 2\min\left \{ \rho ,\rho ^{-1} \right \}-\min\left \{ 1,\rho ^2 \right \} \right )l_\rho \left \| x^{k+1}-x^k \right \|^2\\[2mm]
&+\left \| y^{k+1}-y^k \right \|_\S^2+\left \| z^{k+1}-z^k \right \|_\T^2+\left ( 2-\rho  \right )^2\rho ^{-1}\sigma \left \|\mathcal{B}^*\left ( z^{k+1}-z^k   \right )\right \|^2
		\end{split}
	\end{equation*}

	\begin{equation*}
		\begin{split}
&+2\left \| y_e^{k+1} \right \|_{\Sigma _f}^2+2\left \| z_e^{k+1} \right \|_{\Sigma _g}^2
			+\left ( 2-\rho  \right )\sigma  \left \| \mathcal{\varepsilon }^*\left ( y^{k+1},z^{k+1},0 \right )-c  \right \|^2\\[2mm]
&-\frac{1}{2}\left ( 2-\rho  \right )\rho \sigma l_\rho \left \| \varepsilon ^*\left ( y^{k+1},z^{k+1},x^{k+1} \right )-c \right \|^2\\[2mm]
			&-\frac{1}{2}\left ( 2-\rho  \right )\left ( 1-\min\left \{ \rho ,\rho ^{-1} \right \} \right )\sigma l_\rho \left \| \mathcal{B}^*\left ( z^{k+1} -z^k\right ) \right \|^2\\[2mm]
			=&\left ( \sigma \rho  \right )^{-1}\left \| x_e^{k+1}+\sigma\left( 1-\rho\right) \mathcal{B}^*z_e^{k+1}\right \|^2 +\left \|  y_e^{k+1}\right \|_\S^2+\left \|  z_e^{k+1}\right \|_\T^2+\sigma \left ( 2-\rho  \right )\left \|\mathcal{B}^*z_e^{k+1}   \right \|^2 \\[2mm]
			&+\rho ^{-1}\left ( 2-\rho  \right )\left \| z^{k+1}- z^k  \right \|_T^2+\left ( 2\sigma  \right )^{-1}m_\rho\left \| x^{k+1}-x^k \right \|^2\\[2mm]
			&+\left \| y^{k+1}-y^k \right \|_S^2+\left \| z^{k+1}-z^k \right \|_T^2+\frac{1}{2}n_\rho\sigma \left \|\mathcal{B}^*\left ( z^{k+1}-z^k   \right )\right \|^2\\[2mm]
			&+\frac{1}{2}o_\rho\sigma  \left \| \mathcal{\varepsilon }^*\left ( y^{k+1},z^{k+1},x^{k+1} \right )-c  \right \|^2+2\left \| y_e^{k+1} \right \|_{\Sigma _f}^2+2\left \| z_e^{k+1} \right \|_{\Sigma _g}^2,
		\end{split}
	\end{equation*}
\noindent which is equivalent to
\begin{equation}\label{eq:24}
	\begin{split}
		\left ( \sigma \rho  \right )^{-1}&\left \| x_e^{k}+\sigma\left( 1-\rho\right) \mathcal{B}^*z_e^k\right \|^2+\left \|  y_e^k\right \|_S^2+\left \|  z_e^k\right \|_T^2+\sigma \left ( 2-\rho  \right )\left \|\mathcal{B}^*z_e^k \right \|^2 \\[2mm]
		&+\rho ^{-1}\left ( 2-\rho  \right )\left \| z^k- z^{k-1}  \right \|_\T^2+\left \| y_e^{k} \right \|_{\Sigma _f}^2+\left \| z_e^{k} \right \|_{\Sigma _g}^2+\frac{1}{4}o_\rho\sigma \left \| \mathcal{\varepsilon }^*\left ( y^{k},z^{k},x^k \right )-c  \right \|^2\\[2mm]
		\geq& \left[ \left ( \sigma \rho  \right )^{-1}\left \| x_e^{k+1}+\sigma\left( 1-\rho\right) \mathcal{B}^*z_e^{k+1}\right \|^2 +\left \|  y_e^{k+1}\right \|_\S^2+\left \|  z_e^{k+1}\right \|_T^2+\sigma \left ( 2-\rho  \right )\left \|\mathcal{B}^*z_e^{k+1}   \right \|^2 \right.\\[2mm]
		&+\rho ^{-1}\left ( 2-\rho  \right )\left \| z^{k+1}- z^k  \right \|_\T^2 +\left \| y_e^{k+1} \right \|_{\Sigma _f}^2+\left \| z_e^{k+1} \right \|_{\Sigma _g}^2\\[2mm]
		&\left.+\frac{1}{4}o_\rho\sigma  \left \| \mathcal{\varepsilon }^*\left ( y^{k+1},z^{k+1},x^{k+1} \right )-c  \right \|^2\right] +\left ( 2\sigma  \right )^{-1}m_\rho\left \| x^{k+1}-x^k \right \|^2+\left \| y^{k+1}-y^k \right \|_\S^2 \\[2mm]
		&+\left \| z^{k+1}-z^k \right \|_T^2+\frac{1}{2}n_\rho\sigma \left \|\mathcal{B}^*\left ( z^{k+1}-z^k   \right )\right \|^2+\left \| y_e^{k} \right \|_{\Sigma _f}^2+\left \| y_e^{k+1} \right \|_{\Sigma _f}^2+\left \| z_e^{k} \right \|_{\Sigma _g}^2\\[2mm]
&+\left \| z_e^{k+1} \right \|_{\Sigma _g}^2
		+\frac{1}{4}o_\rho\sigma \left \| \mathcal{\varepsilon }^*\left ( y^{k+1},z^{k+1},x^{k+1} \right )-c  \right \|^2+\frac{1}{4}o_\rho\sigma \left \| \mathcal{\varepsilon }^*\left ( y^{k},z^{k},x^k \right )-c  \right \|^2.
	\end{split}
\end{equation}
Moreover, from (\ref{epstar}) we have
\begin{equation*}
	\begin{aligned}
		\mathcal{\varepsilon }^*\left ( y^{k+1},z^{k+1},x^{k+1} \right )-c=\mathcal{A}^*\left ( y^{k+1}-\bar{y} \right )+\mathcal{B}^*\left ( z^{k+1} -\bar{z}\right ),\\[2mm]
		\mathcal{\varepsilon }^*\left ( y^{k+1},z^{k+1},x^{k+1} \right )-c=\mathcal{A}^*\left ( y^{k+1}-\bar{y} \right )+\mathcal{B}^*\left ( z^{k+1} -\bar{z}\right ),
	\end{aligned}
\end{equation*}
and we also have
\begin{equation*}
	\begin{aligned}
		&\left \| y^{k+1}-\bar{y} \right \|_{\Sigma _f}^2+\left \| y^{k}-\bar{y} \right \|_{\Sigma _f}^2\geq \frac{1}{2}\left \| y^{k+1}-y^k\right \|_{\Sigma _f}^2,\\[2mm]
		&\left \| z^{k+1}-\bar{z} \right \|_{\Sigma _g}^2+\left \| z^{k}-\bar{z} \right \|_{\Sigma _g}^2\geq \frac{1}{2}\left \| z^{k+1}-z^k\right \|_{\Sigma _g}^2,\\[2mm]
		&\left \| \mathcal{\varepsilon }^*\left ( y^{k+1},z^{k+1},x^{k+1} \right )-c \right \|^2+\left \|  \mathcal{\varepsilon }^*\left ( y^{k},z^{k},x^k \right )-c\right \|^2\geq \frac{1}{2}\left \| \mathcal{\varepsilon }^*\left ( y^{k+1}-y^k,z^{k+1} -z^k,x^{k+1}-x^k \right ) \right \|^2.
	\end{aligned}
\end{equation*}
Based on the above relations, one can see from (\ref{eq:24})  for any $\rho\in(0,2)$ that
\begin{equation}
	\begin{split}
		\left ( \sigma \rho  \right )^{-1}&\left \| x_e^{k}+\sigma\left( 1-\rho\right) \mathcal{B}^*z_e^k\right \|^2+\left \|  y_e^k\right \|_\S^2+\left \|  z_e^k\right \|_\T^2+\sigma \left ( 2-\rho  \right )\left \|\mathcal{B}^*z_e^k \right \|^2 \\[2mm]
		&+\rho ^{-1}\left ( 2-\rho  \right )\left \| z^k- z^{k-1}  \right \|_\T^2+\left \| y_e^{k} \right \|_{\Sigma _f}^2+\left \| z_e^{k} \right \|_{\Sigma _g}^2+\frac{1}{4}o_\rho\sigma \left \| \mathcal{\varepsilon }^*\left ( y_e^{k},z_e^{k},x_e^k \right ) \right \|^2\\[2mm]
		\geq& \left[ \left ( \sigma \rho  \right )^{-1}\left \| x_e^{k+1}+\sigma\left( 1-\rho\right) \mathcal{B}^*z_e^{k+1}\right \|^2 +\left \|  y_e^{k+1}\right \|_\S^2+\left \|  z_e^{k+1}\right \|_\T^2+\sigma \left ( 2-\rho  \right )\left \|\mathcal{B}^*z_e^{k+1}   \right \|^2 \right.\\[2mm]
		&+\rho ^{-1}\left ( 2-\rho  \right )\left \| z^{k+1}- z^k  \right \|_\T^2 +\left \| y_e^{k+1} \right \|_{\Sigma _f}^2+\left \| z_e^{k+1} \right \|_{\Sigma _g}^2\\[2mm]
		&\left.+\frac{1}{4}o_\rho\sigma  \left \| \mathcal{\varepsilon }^*\left ( y_e^{k+1},z_e^{k+1},x_e^{k+1} \right ) \right \|^2\right]+\left ( 2\sigma \right )^{-1}m_\rho \left \| x^{k+1}-x^k \right \|^2 \\[2mm]
		&+\left \| y^{k+1}-y^k \right \|_S^2+\left \| z^{k+1}-z^k \right \|_\T^2+\frac{1}{2}n_\rho\sigma \left \|\mathcal{B}^*\left ( z^{k+1}-z^k   \right )\right \|^2\\[2mm]
		&+\frac{1}{2}\left \| y^{k+1}-y^k \right \|_{\Sigma _f}^2+\frac{1}{2}\left \| z^{k+1}-z^k \right \|_{\Sigma _g}^2\\[2mm]
&+\frac{1}{8}o_\rho\sigma \left \|  \mathcal{\varepsilon }^*\left ( y^{k+1}-y^k,z^{k+1} -z^k,x^{k+1}-x^k \right )  \right \|^2,
	\end{split}
\end{equation}
which implies the inequality (\ref{eq:27}) holds. Because $\bar{\Omega }$ is assumed to be nonempty and that (\ref{eq:27}) holds for any $\bar{u}\in\bar{\Omega }$, we get that desired result (\ref{eq:28})  holds.
\end{proof}

To prove the local linear convergence rate of sPGADMM, we needs the following calmness assumption on $R^{-1}$ at the original point for a KKT point.
\begin{assumption}\label{assumption2}
	The inverse mapping of KKT mapping $R(\cdot)$ defined in (\ref{eq:22}) is calm at the origin for $\bar{u}$ with modulus $\kappa >0$ in the sense that there exists a constant $r>0$ such that
	\begin{equation*}
		\textrm{dist}\left ( u,\bar{\Omega } \right )\leq \kappa \left \| R\left ( u \right ) \right \|,\qquad\forall u\in \left \{ u\in \mathcal{U}|\left \| u-\bar{u} \right \| \leq r\right \}.
	\end{equation*}
\end{assumption}

Now, we are ready to present the local linear convergence rate of  sPGADMM.
\begin{theorem}\label{theorem2}
	Suppose that Assumptions \ref{assumption1} and \ref{assumption2} hold. Assume that $\Sigma _f+\S+\sigma \mathcal{A}\mathcal{A}^*\succ 0$ and $\Sigma _g+\T+\sigma \mathcal{B}\mathcal{B}^*\succ 0 $.
	Then the sequence  $\left\{(y^k,z^k,x^k)\right\}$ generated by sPGADMM converges to a KKT point $\bar{u}=\left ( \bar{y},\bar{z},\bar{x} \right )\in \bar{\Omega }$, and there exists a  $\bar{k}\geq 0$ such that for all $k\geq \bar{k}$ it holds that
	\begin{equation}\label{eq:35}
		\textrm{dist}_{\bar{\mathcal{M}}}^2\left (u^{k+1}  ,\bar{\Omega }\right )+\left \| z^{k+1}- z^{k}  \right \|_\T^2\leq  \vartheta\left [\textrm{dist}_{\bar{\mathcal{M}}}^2\left (u^{k}  ,\bar{\Omega }\right )+\left \| z^{k}- z^{k-1}  \right \|_\T^2  \right ],
	\end{equation}
	where $\bar{\mathcal{M}}:=\rho \left ( 2-\rho  \right )^{-1}\mathcal{M}
	$ and $\vartheta:=\left [ 1+k_5\rho\left(\rho +\left ( 2-\rho  \right )k_5 \right )^{-1} \right ]^{-1}$
	with
	$$
	k_5:=\min\left \{ 1,m_\rho ,\frac{1}{2}n_{\rho} \right \}k_4^{-1}\rho \left ( 2-\rho  \right )^{-1}\kappa ^{-2}\lambda _{\max}\left ( \bar{\mathcal{M}} \right )^{-1},
	$$
	where $\lambda_{\max}(\cdot)$ denotes the largest eigenvalue of a given matrix.
	Moreover, there also exists a positive number $\eta \in \left [ \vartheta ,1 \right ) $ such that for all $k\geq \bar{k}$,
	\begin{equation}\label{eq:36}
		\textrm{dist}_{\bar{\mathcal{M}}}^2\left (u^{k+1}  ,\bar{\Omega }\right )+\left \| z^{k+1}- z^{k}  \right \|_\T^2\leq  \eta\left [\textrm{dist}_{\bar{\mathcal{M}}}^2\left (u^{k}  ,\bar{\Omega }\right )+\left \| z^{k}- z^{k-1}  \right \|_\T^2  \right ].
	\end{equation}
\end{theorem}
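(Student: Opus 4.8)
The plan is to drive the one-step ``contraction'' estimate \eqref{eq:28} by the error bound that calmness supplies through Lemma \ref{lemma3}. The first thing I would observe is that the positive-definiteness hypotheses here are precisely those of Theorem \ref{theorem1}, so $\{u^k\}$ converges to some $\bar u=(\bar y,\bar z,\bar x)\in\bar\Omega$; consequently there is an index $\bar k$ with $\|u^k-\bar u\|\le r$ for all $k\ge\bar k$, and Assumption \ref{assumption2} then yields $\textrm{dist}(u^{k+1},\bar\Omega)\le\kappa\|R(u^{k+1})\|$ for every such $k$. This is the bridge from the KKT residual $R(u^{k+1})$ back to $\textrm{dist}_{\bar{\mathcal M}}(u^{k+1},\bar\Omega)$.

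Next I would rescale \eqref{eq:28} by $\rho(2-\rho)^{-1}$: writing $\bar{\mathcal M}=\rho(2-\rho)^{-1}\mathcal M$, $a_k:=\textrm{dist}_{\bar{\mathcal M}}^2(u^k,\bar\Omega)$ and $b_k:=\|z^k-z^{k-1}\|_\T^2$, it becomes $a_k+b_k\ge a_{k+1}+b_{k+1}+\rho(2-\rho)^{-1}\|u^{k+1}-u^k\|_{\mathcal H}^2$, so the whole task is to bound $\rho(2-\rho)^{-1}\|u^{k+1}-u^k\|_{\mathcal H}^2$ below by a fixed positive multiple of $a_{k+1}+b_{k+1}$. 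I would get this from two lower bounds. For the first, a block-by-block comparison of $\mathcal H$ with $\mathcal{H}_0$ — after discarding the positive semidefinite pieces $\Sigma_f$, $\Sigma_g$ and $\tfrac18 o_\rho\sigma\varepsilon\varepsilon^*$ — shows $\mathcal H\succeq k_4^{-1}\min\{1,m_\rho,\tfrac12 n_\rho\}\,\mathcal{H}_0$ (the $y$- and $\T$-parts require the constant $\le 1$, the $\sigma\mathcal B\mathcal B^*$-part requires $\le\tfrac12 n_\rho$, the $x$-part requires $\le m_\rho$, all strictly positive on $(0,2)$). Chaining this with \eqref{eq:23}, with the calmness bound above, and with $\textrm{dist}^2(u^{k+1},\bar\Omega)\ge\lambda_{\max}(\bar{\mathcal M})^{-1}\textrm{dist}_{\bar{\mathcal M}}^2(u^{k+1},\bar\Omega)$ reproduces, once the definition of $k_5$ is unfolded, exactly $\rho(2-\rho)^{-1}\|u^{k+1}-u^k\|_{\mathcal H}^2\ge k_5\,a_{k+1}$. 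For the second, $\mathcal H$ dominates the block operator carrying $\T$ in the $z$-slot, so the same quantity is also $\ge\rho(2-\rho)^{-1}b_{k+1}$.

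I would then form the convex combination of these two inequalities, putting weight $\theta=\rho(\rho+(2-\rho)k_5)^{-1}$ on the first — the choice that equalizes the two coefficients — to obtain $\rho(2-\rho)^{-1}\|u^{k+1}-u^k\|_{\mathcal H}^2\ge\lambda\,(a_{k+1}+b_{k+1})$ with $\lambda=k_5\rho(\rho+(2-\rho)k_5)^{-1}$. Feeding this back into the rescaled inequality gives $a_k+b_k\ge(1+\lambda)(a_{k+1}+b_{k+1})$, which is \eqref{eq:35} with $\vartheta=(1+\lambda)^{-1}$; since $\mathcal M$ and $\mathcal H$ are positive definite by Lemma \ref{prop3}, $\lambda>0$ and hence $\vartheta\in(0,1)$, and \eqref{eq:36} is then immediate with any $\eta\in[\vartheta,1)$.

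The step I expect to be the main obstacle is reconciling the two lower bounds with the prescribed constants: assembling $k_5$ exactly as stated from the chain Lemma \ref{lemma3}$\,\to\,$calmness$\,\to\,$norm equivalence, verifying $m_\rho,n_\rho>0$ and $0<\rho l_\rho<2$, $0<h_\rho<2(2-\rho)$ throughout $(0,2)$, keeping track of how the $\varepsilon\varepsilon^*$ cross-terms sitting in both $\mathcal M$ and $\mathcal H$ propagate through the estimates, and — most delicately — calibrating the convex-combination weight so that the resulting rate is precisely the stated $\vartheta$ rather than a cruder constant. Everything else is a matter of carefully threading together ingredients already in place from Lemmas \ref{lemma3}--\ref{prop3} and inequality \eqref{eq:28}.
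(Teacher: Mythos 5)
Your proposal is correct and follows essentially the same route as the paper's proof: convergence from Theorem \ref{theorem1} to activate the calmness bound, the chain $\mathcal H\succeq k_4^{-1}\min\{1,m_\rho,\tfrac12 n_\rho\}\mathcal H_0$ together with Lemma \ref{lemma3} to get the $k_5\,\textrm{dist}^2_{\bar{\mathcal M}}$ lower bound, the $\T$-block bound for the $\|z^{k+1}-z^k\|_\T^2$ term, and the convex split with weight $\rho(\rho+(2-\rho)k_5)^{-1}$ (the paper's $k_6$) chosen to equalize the two coefficients. No gaps.
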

\begin{proof}
	From Theorem \ref{theorem1}, we know that the sequence $\left\{(y^k,z^k,x^k)\right\}$ generated by  sPGADMM converges to a KKT point $\bar{u}=(\bar{y},\bar{z},\bar{x})\in\bar{\Omega }$, which shows that there exist $\bar{k}\geq 0$ and $r>0$ such that for any $k\geq \bar{k}$,
	\[\left \| u^{k+1}-\bar{u} \right \|\leq r.\]
	Subsequently, from Assumption \ref{assumption2} and Lemma \ref{lemma3}, it gets for all $k\geq \bar{k}$ that
	\begin{equation}\label{eq:31}
		\textrm{dist}^2\left ( u^{k+1},\bar{\Omega } \right )\leq \kappa ^2\left \| R\left ( u^{k+1} \right ) \right \|^2\leq \kappa ^2\left \| u^{k+1}-u^k \right \|_{\mathcal{H}_0}^2.
	\end{equation}
Denoting $\bar{\mathcal{H}}:=\rho \left ( 2-\rho  \right )^{-1}\mathcal{H}$ and noticing
$\bar{\mathcal{M}}:=\rho \left ( 2-\rho  \right )^{-1}\mathcal{M}
$, then (\ref{eq:28}) can be simplified as
\begin{equation}\label{eq:34}
	\textrm{dist}_{\bar{\mathcal{M}}}^2\left ( u^{k},\bar{\Omega } \right )+\left \| z^k-z^{k-1} \right \|_\T^2\geq \left (\textrm{dist}_{\bar{\mathcal{M}}}^2\left ( u^{k+1},\bar{\Omega } \right )+\left \| z^{k+1}-z^k \right \|_\T^2  \right )+\left \| u^{k+1}-u^k \right \|_{\bar{\mathcal{H}}}^2.
\end{equation}
    Recalling the structure of $\H$, we know for all $k\geq 0$ that
    \begin{equation}\label{eq:39}
    	\left \| u^{k+1}-u^k \right \|_{\bar{\mathcal{H}}}^2\geq \rho \left ( 2-\rho  \right )^{-1}\left \| z^{k+1}-z^k \right \|_\T^2.
    \end{equation}
    Observing the structures of $\bar{\mathcal{H}}$ and $\mathcal{H}_0$, it holds that
    \begin{equation}\label{eq:32}
    	k_4\bar{\mathcal{H}}\geq \min\Big \{ 1,m_\rho ,\frac{1}{2}n_{\rho} \Big \}\rho \left ( 2-\rho  \right )^{-1}\mathcal{H}_0+\frac{1}{8}k_4\rho \left ( 2-\rho  \right )^{-1}o_\rho \sigma \varepsilon \varepsilon ^*,\qquad \forall \rho \in \left ( 0,2 \right ).
    \end{equation}
    Combining (\ref{eq:31}) and (\ref{eq:32}), we get for $k\geq \bar{k}$ that
    \begin{equation}\label{eq:33}
    	\begin{split}
    		\left \| u^{k+1}-u^k \right \|_{\bar{\mathcal{H}}}^2&\geq \min\Big \{ 1,m_\rho ,\frac{1}{2}n_{\rho} \Big \}k_4^{-1}\rho \left ( 2-\rho  \right )^{-1}\left \| u^{k+1}-u^k \right \|_{\mathcal{H}_0}^2\\[2mm]
    		&\geq \min\Big \{ 1,m_\rho ,\frac{1}{2}n_{\rho} \Big \}k_4^{-1}\rho \left ( 2-\rho  \right )^{-1}\kappa ^{-2}\textrm{dist}^2\left ( u^{k+1},\bar{\Omega } \right )\\[2mm]
    		&\geq k_5\textrm{dist}_{\bar{\mathcal{M}}}^2\left ( u^{k+1},\bar{\Omega } \right ).
    	\end{split}
    \end{equation}
Let $k_6:=\rho\left(\rho +\left ( 2-\rho  \right )k_5 \right )^{-1}$. Accordingly,  for all $k\geq \bar{k}$, it follows from (\ref{eq:33}), (\ref{eq:39})and (\ref{eq:34}) that
\begin{equation*}
	\begin{split}
	\textrm{dist}_{\bar{\mathcal{M}}}^2&\left ( u^{k},\bar{\Omega } \right )+\left \| z^k-z^{k-1} \right \|_\T^2\\[2mm]
	\geq &\textrm{dist}_{\bar{\mathcal{M}}}^2\left ( u^{k+1},\bar{\Omega } \right )+\left \| z^{k+1}-z^k \right \|_\T^2+k_6\left \| u^{k+1}-u^k \right \|_{\bar{\mathcal{H}}}^2+\left (1-k_6\right )\left \| u^{k+1}-u^k \right \|_{\bar{\mathcal{H}}}^2\\[2mm]
	\geq&\left ( 1+ k_5k_6 \right )\textrm{dist}_{\bar{\mathcal{M}}}^2\left ( u^{k+1},\bar{\Omega } \right )+\left [ 1+ \left (1-k_6\right )\rho \left ( 2-\rho  \right )^{-1}\right ]\left \| z^{k+1}-z^k \right \|_\T^2.
	\end{split}
\end{equation*}
From the fact  that $1+ k_5k_6= 1+ \left (1-k_6\right )\rho \left ( 2-\rho  \right )^{-1}$, we get the assertion (\ref{eq:35}) holds with $ \vartheta=\left( 1+ k_5k_6\right) ^{-1}$.
Because $ \vartheta<1$,  there must exists a positive number $\eta \in \left [ \vartheta ,1 \right ) $ such that  (\ref{eq:36}) holds. This completes the proof.
\end{proof}

From Theorem \ref{theorem2}, we know that the calmness assumption on $R^{-1}$ is needed to ensure the local linear convergence rate of sPGADMM.
From the preliminaries reviewed in Subsection \ref{sec2.2} we know that, $R^{-1}$ is calm if it is a piecewise polyhedral multivalued mapping, and particularly, it is the subdifferential mapping of a convex piecewise linear-quadratic function. Using the fact that $R^{-1}$ is piecewise polyhedral if and only if $R$ itself is piecewise polyhedral, we conclude that the calmness property holds automatically for convex composite optimization problem (\ref{eq:1}), as well as the following corollary.
\begin{corollary}\label{corof}
	Let $\rho\in\left( 0,2\right) $. Suppose that the set $\bar{\Omega }$ is nonempty and  that both $\Sigma _f+S+\sigma \mathcal{A} \mathcal{A}^*$ and $\Sigma _g+T+\sigma \mathcal{B} \mathcal{B}^*$ are positive definite. Let $u^k=\left\{(y^k,z^k,x^k)\right\}$ be the sequence generated by Algorithm sPGADMM that converges to $\bar{u}\in\bar{\Omega }$. Assume that the mapping $R(\cdot)$ is piecewise polyhedral. Then, there exists a constant $\hat{\kappa }>0$ such that for all $k\geq 0$ ,
	\begin{equation}\label{eq:37}
		\textrm{dist}\left ( u^k,\bar{\Omega } \right )\leq \hat{\kappa } \| R\left ( u^k \right ) \|,
	\end{equation}
and
\begin{equation}\label{eq:38}
	\textrm{dist}_{\bar{\mathcal{M}}}^2\left (u^{k+1}  ,\bar{\Omega }\right )+\left \| z^{k+1}- z^{k}  \right \|_\T^2\leq  \hat{\vartheta}\left [\textrm{dist}_{\bar{\mathcal{M}}}^2\left (u^{k}  ,\bar{\Omega }\right )+\left \| z^{k}- z^{k-1}  \right \|_\T^2  \right ],
\end{equation}
	where
	\[\hat{\vartheta}=\left [ 1+\hat{k_5}\rho\left(\rho +\left ( 2-\rho  \right )\hat{k_5} \right )^{-1} \right ]^{-1} \quad \textrm{and} \quad \hat{k_5}=\min\left \{ 1,m_\rho ,\frac{1}{2}n_{\rho} \right \}k_4^{-1}\rho \left ( 2-\rho  \right )^{-1}\hat{\kappa} ^{-2}\lambda _{\max}\left ( \bar{\mathcal{M}} \right )^{-1}.\]
\end{corollary}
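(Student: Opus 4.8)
The plan is to derive Corollary~\ref{corof} as an immediate specialization of Theorem~\ref{theorem2}, exploiting the fact that for the convex composite problem~\eqref{eq:1} the calmness Assumption~\ref{assumption2} is automatically satisfied. First I would invoke Theorem~\ref{theorem1}: under the stated positive-definiteness hypotheses on $\Sigma_f+\S+\sigma\A\A^*$ and $\Sigma_g+\T+\sigma\B\B^*$, the sequence $\{u^k\}$ converges to some $\bar{u}\in\bar{\Omega}$. Then the role of the piecewise-polyhedral hypothesis on $R(\cdot)$ enters: by Robinson's theorem (cited in Subsection~\ref{sec2.2}), a piecewise polyhedral multivalued mapping is locally upper Lipschitz continuous at every point with a \emph{uniform} modulus, and since $R$ is piecewise polyhedral iff $R^{-1}$ is, $R^{-1}$ is locally upper Lipschitz at the origin --- in particular calm at the origin for $\bar u$ with some modulus $\hat\kappa>0$ and some radius $\hat r>0$. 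This yields~\eqref{eq:37} on the neighborhood $\{u:\|u-\bar u\|\le\hat r\}$; to upgrade it to ``for all $k\ge0$'' one notes that the sequence $\{u^k\}$ is bounded (Theorem~\ref{theorem1}), so $\|R(u^k)\|$ is bounded below away from zero on the finitely many indices lying outside the neighborhood unless $R(u^k)=0$, and enlarging $\hat\kappa$ if necessary absorbs these finitely many terms --- this is the only place where a small argument beyond a direct citation is needed.

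Next I would feed the now-verified global error bound~\eqref{eq:37} into the machinery of Theorem~\ref{theorem2}. Concretely, inequalities~\eqref{eq:28} (equivalently~\eqref{eq:34} after scaling by $\rho(2-\rho)^{-1}$), \eqref{eq:31} (which now holds for all $k\ge0$ with $\hat\kappa$ in place of $\kappa$), \eqref{eq:39}, and the operator inequality~\eqref{eq:32} go through verbatim, since none of them used the \emph{local} nature of the calmness bound --- only the bound itself. Repeating the chain of inequalities in~\eqref{eq:33} with $\hat\kappa$ gives $\|u^{k+1}-u^k\|_{\bar{\mathcal H}}^2\ge\hat k_5\,\mathrm{dist}_{\bar{\mathcal M}}^2(u^{k+1},\bar\Omega)$ for all $k\ge0$, with $\hat k_5$ as defined in the statement. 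Splitting $\|u^{k+1}-u^k\|_{\bar{\mathcal H}}^2=\hat k_6\|u^{k+1}-u^k\|_{\bar{\mathcal H}}^2+(1-\hat k_6)\|u^{k+1}-u^k\|_{\bar{\mathcal H}}^2$ with $\hat k_6:=\rho(\rho+(2-\rho)\hat k_5)^{-1}$, and using the identity $1+\hat k_5\hat k_6=1+(1-\hat k_6)\rho(2-\rho)^{-1}$, one obtains~\eqref{eq:38} with $\hat\vartheta=(1+\hat k_5\hat k_6)^{-1}$ exactly as in the proof of Theorem~\ref{theorem2}.

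The main (and really the only) obstacle is the passage from the \emph{local} error bound guaranteed by calmness to the \emph{global} error bound~\eqref{eq:37} valid for every $k\ge0$; once that is secured, everything else is a verbatim rerun of the estimates already established in Theorem~\ref{theorem2}, with $\kappa$ replaced by $\hat\kappa$ and with the restriction $k\ge\bar k$ dropped. I would handle this obstacle by combining (i) the uniform-modulus feature of piecewise polyhedral mappings, (ii) boundedness of $\{u^k\}$ from Theorem~\ref{theorem1}, and (iii) the elementary observation that only finitely many iterates can lie outside any fixed neighborhood of the limit $\bar u$, so the constant can be inflated to cover them. I would then remark that this corollary applies in particular when $f_1,g_1$ are piecewise linear-quadratic (e.g.\ polyhedral gauges, $\ell_1$-type penalties) and $f_2,g_2$ are convex quadratics, since then $\partial f,\partial g$ are piecewise polyhedral, hence so is $R$, and the hypothesis ``$R(\cdot)$ is piecewise polyhedral'' is met unconditionally.
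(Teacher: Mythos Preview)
Your proposal is correct and follows essentially the same route as the paper: establish the global error bound \eqref{eq:37} from the piecewise polyhedral hypothesis, then rerun the estimates of Theorem~\ref{theorem2} with $\hat\kappa$ in place of $\kappa$ and the restriction $k\ge\bar k$ dropped. The only cosmetic difference is in the local-to-global upgrade: the paper formulates the local bound on the set $\{u:\|R(u)\|\le\zeta\}$ (directly from local upper Lipschitz continuity of $R^{-1}$ at the origin) and handles the complementary case $\|R(u^k)\|>\zeta$ via boundedness $\|u^k-\bar u\|\le r$ to produce the explicit constant $\hat\kappa=\max\{\kappa,\,r\zeta^{-1}\}$, whereas you use the neighborhood $\{u:\|u-\bar u\|\le\hat r\}$ together with a finiteness argument and inflate $\hat\kappa$ accordingly; both arguments are equivalent and your version is just as valid.
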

\begin{proof}
	Because $\bar{\Omega }$ is nonempty and $R^{-1}$ is piecewise polyhedral, we know that there exist constants $\kappa>0$ and $\zeta>0$ such that
	\[\textrm{dist}\left ( u,\bar{\Omega } \right )\leq \kappa \left \| R\left ( u \right ) \right \|,\qquad \forall u\in \left \{ u\in\mathcal{U} \ | \ \left \| R\left ( u \right ) \right \| \leq \zeta \right \}.\]
	Besides, according to the proof of Theorem \ref{theorem2}, there exists a constant $r>0$ such that for all $k\geq 1$, the sequence $\left\{(y^k,z^k,x^k)\right\}$ generated by sPGADMM converges to a KKT point $\bar{u}=\left ( \bar{y},\bar{z},\bar{x} \right )$ if $\left \| u^k-\bar{u} \right \|\leq r$. With these $u^k$  satisfying $\left \| R\left ( u^k \right ) \right \|>\zeta $, we have
	$$
	\textrm{dist}\left ( u^k,\bar{\Omega } \right )\leq \left \| u^k-\bar{u} \right \|\leq r< r\big( \zeta ^{-1} \left \| R\left ( u^k \right ) \right \|\big ).
	$$
	Then (\ref{eq:37}) holds immediately with $\hat{\kappa }:=\max\left \{ \kappa ,r\zeta ^{-1} \right \}$ for all $k\geq1$. The inequality (\ref{eq:38}) can be proved in a similar way to the one in Theorem \ref{theorem2}.
\end{proof}

To end this section, we note that the Corollary \ref{corof} is
the global linear convergence rate of sPGADMM, and for convex composite quadratic programming, it is holds with no additional conditions because the error bound assumption holds automatically.

\section{Conclusions}\label{sec5}

The generalized ADMM for convex composite optimization problems was firstly proposed by Eckstein \& Bertsekas \cite{Eckstein1992OnTD} in $1992$, and has been widely analyzed and implemented over the fast few decades. The convergence of this method can be obtained from the framework of PPA, but the proof from the aspect of optimization is relatively fewer. This paper considered a sPGADMM which covers almost all the existed generalized ADMMs as special cases, but importantly, the subproblems would be solved more efficiently with the help of the semi-proximal term. This paper focused on proving the linear convergence rate of sPGADMM under the condition that the mapping $R^{-1}(\cdot)$ is calmness at the original point for $\bar u$ with a suitable modulus, because this condition is very mild and holds automatically for the considered problem (\ref{eq:1}). The required condition is same to the work of Han et al. \cite{penglinear} in establishing the linear convergence rate of the semi-proximal ADMM of Fazel et al. \cite{Fazel2013HankelMR}. To some extent, our paper can be considered as an extension of the work of Han et al. \cite{penglinear} to the generalized variant of ADMM. However, we must note that, this extension is not trivial, because the relaxation factor has caused many obstacles for convergence analysis.
Certainly, this is the motivation as well as the contribution of this paper.


\section*{Disclosure statement}
The authors report there are no competing interests to declare.

\section*{Funding}
This work was supported by the National Natural Science Foundation of China under Grant [number 11971149].

\bibliography{references}  
\bibliographystyle{plain}

\end{document}